\newtheorem{thm}{Theorem}[section]
\newtheorem{lem}[thm]{Lemma}
\newtheorem{prop}[thm]{Proposition}
\newtheorem{ex}[thm]{Example}
\newtheorem{rem}[thm]{Remark}
\newcommand{\R}{\mathbb{R}}
\newcommand{\C}{\mathbb{C}}
\newcommand{\Z}{\mathbb{Z}}
\newcommand{\supp}{\operatorname{supp}}
\newcommand{\card}{\operatorname{card}}
\def\spmapright#1{\smash{%
 \mathop{\hbox to 1.3cm{\rightarrowfill}}
  \limits^{#1}}}
\begin{document}

\title[The Square Root Problem and Subnormal Aluthge Transforms]{\bf The Square Root Problem and \\ Subnormal Aluthge Transforms of \\ Recursively Generated Weighted Shifts}
\author[R.E. Curto, H. El Azhar,  Y. Omari and E.H. Zerouali]{R.E. Curto, H. El Azhar, Y. Omari and E.H. Zerouali}
\address{R.E. Curto, Department of Mathematics, The University of Iowa, Iowa City, Iowa, U.S.A.}
\email{raul-curto@uiowa.edu}
\address{H. El Azhar, Faculty of
sciences, Chouaib Doukkali University, El Jadida, Morocco.}
\email{el-azhar.h@ucd.ac.ma}

\address{Y. Omari,  Faculty of Sciences, Ibn Tofail University, Kenitra, Morocco.}
\email{omariysf@gmail.com}

\address{ E.H. Zerouali, Permanent address.  Laboratory of Mathematical Analysis and Applications, Faculty of Sciences, Mohammed V University in Rabat, Rabat, Morocco.}
\address{Current address: Department of Mathematics, The University of Iowa, Iowa City, Iowa, U.S.A.}
\email{elhassan.zerouali@fsr.um5.ac.ma \& ezerouali@uiowa.edu}
\begin{abstract} 
For recursively generated shifts, we provide definitive answers to two outstanding problems in the theory of unilateral weighted shifts: the Subnormality Problem ({\bf SP}) (related to the Aluthge transform) and the Square Root Problem ({\bf SRP}) (which deals with Berger measures of subnormal shifts). \ We use the Mellin Transform and the theory of exponential polynomials to establish that ({\bf SP}) and ({\bf SRP}) are equivalent if and only if a natural functional equation holds for the canonically associated Mellin transform. \ For $p$--atomic measures with $p \le 6$, our main result provides a new and simple proof of the above-mentioned equivalence. \ Subsequently, we obtain an example of a $7$--atomic measure for which the equivalence fails. \ This provides a negative answer to a problem posed by G.R. Exner in 2009, and to a recent conjecture formulated by R.E. Curto et al in 2019.
\end{abstract}

\subjclass {Primary  44A60; 47B37.  Secondary 30C15, 40A99.}
\keywords{Subnormal weighted shift, Aluthge transform, square root problem, Mellin transform, finitely atomic measure.}  
\maketitle

\section{Introduction}
Let $\mathcal{H}$ be an infinite dimensional  Hilbert space and let $\mathcal{L(H)}$ be the space of all bounded linear operators on $\mathcal{H}$. \ An operator  $T \in \mathcal{L(H)}$ is normal if $TT^*=T^*T$, and subnormal if it is the restriction of a normal operator to an invariant subspace. \ (Here $T^*$ stands for the usual adjoint operator of $T$.) \ The polar decomposition of $T$ is given by the unique representation $T= V|T|$, where $|T|:= (T^*T)^{\frac{1}{2}}$ and $V$ is a partial isometry satisfying $ker \, V = ker \, T$. \ The Aluthge transform is then given by the expression 
$$\tilde T:= |T|^{\frac{1}{2}}V|T |^{\frac{1}{2}}.$$

The Aluthge  transform was introduced in  \cite{al}, in
order to extend several inequalities valid for hyponormal operators, and has received ample attention in the last decades.

We consider below the Hilbert space $\mathcal{H}=l^2(\mathbb{Z}_+)$, endowed with the canonical orthonormal basis $\{e_n\}_{n\in\mathbb{Z}_+}$. \ The unilateral (forward) shift operator  $W_{\alpha}$  is defined on the canonical basis by $W_\alpha e_n:=\alpha_n e_{n+1}$, where  $\alpha=(\alpha_n)_{n\geq 0}$ is a given  sequence of positive real numbers (called {\it weights}). \ It is well known that $W_{\alpha}$ is bounded if and only if the sequence of weights is bounded, and  $\displaystyle\|W_{\alpha}\|=\sup_{n\geq0}\alpha_n<+\infty$. \ Clearly, $W_{\alpha}$ is never normal. \ 

\medskip
We associate with $W_\alpha$ the sequence defined by 
$$
\gamma_0:=1  \mbox{  and } \gamma_k\equiv\gamma_k(\alpha):=\alpha_0^2\alpha_1^2\cdots\alpha_{k-1}^2 \mbox{ for } k\geq 1.
$$ 
We will say that a sequence $\gamma=(\gamma_n)_{n\geq0}$ is a moment sequence on $K\subseteq {\mathbb R}$, or that it admits a representing measure $\mu$ supported in $K$,  if 
\begin{equation}\label{cmp}
	\gamma_n = \gamma_n(\mu):= \int_Kt^nd\mu(t)\: \: \mbox{ for every } \: n\ge 0 \:\: \: \mbox{and} \: \: \mathrm{supp}(\mu) \subseteq K.
\end{equation}

 The Berger-Gellar-Wallen Theorem states that  $W_{\alpha}$ is subnormal if and only if there exists a positive Borel measure $\mu$ (called {\it a Berger measure}), representing for $\gamma$ and such that $\mathrm{supp}(\mu)\subseteq[0,\|W_{\alpha}\|^2]$ \cite[III.8.16]{Con}.  In the sequel, when such $\mu$ exists, we will also write $W_{\alpha}=W_{\mu}$, and identify the weighted shift and its Berger measure.

In the literature, it is common to refer to $\gamma$ as the sequence of moments arising from the weight sequence $\alpha$. \ Consequently, the Berger-Gellar-Wallen characterization is usually described as "  $W_\alpha$ is subnormal if and only if the sequence $\gamma$ of moments of $\alpha$ corresponds to the sequence of moments of a positive Borel measure $\mu$." \ To avoid any possible confusion, in this paper we will reserve the phrase "moment sequence" for the sequence of moments of the measure $\mu$. \ With the exception of the discussion in Section 6, throughout the rest of the paper our basic weighted shift $W_\alpha$ will be subnormal, and we will seek necessary and sufficient conditions for the subnormality of the square root shift $W_{\sqrt{\alpha}}$ and of the Aluthge transform $\widetilde{W_\alpha}$. \ 

In the case where $\mu$ is finitely atomic (that is, $\textrm{supp}(\mu)$ is a finite set), there exists a nonzero polynomial $P$ such that $P(\mu)=0$. \ In particular, the sequence $(\gamma_n)_{n\geq0}$ satisfies a recursive relation, and the weighted shift $W_\mu$ is said to be {\it recursively generated}. \ Conversely, if a subnormal weighted shift is recursively generated, then its Berger measure is finitely atomic \cite[Remark 3.10(i)]{CF}.

The Aluthge transform $\widetilde{ W_\alpha}$ of a weighted shift  $W_\alpha$  is also a weighted shift, associated with the sequence ${\tilde  \alpha}_n=\sqrt{\alpha_n\alpha_{n+1}}, \, n\ge 0$. \ Indeed, it is easy to check that  $|W_\alpha| e_n=\alpha_ne_n$ and that $Ve_n=e_{n+1}$. \ It   follows that
$$\widetilde{ W_\alpha} e_n= \sqrt{\alpha_n\alpha_{n+1}}e_{n+1}= W_{\tilde \alpha}e_n.$$
Notice also that ${\tilde \gamma}^2_n= \frac{1}{\alpha_0^2}\gamma_n \gamma_{n+1}$. \ The problem of detecting the subnormality of the Aluthge transform of a weighted shift has been considered by several authors in the two last decades; see, for example, \cite{exn}. \ As observed above, and previously in \cite{curt1}, the subnormality of the square root shift implies both the subnormality of $W_\alpha$ and the subnormality of the Aluthge transform $\widetilde{ W_\alpha}$. \ On the other hand, it is possible to find a weight sequence $\alpha$ such that $W_{\sqrt{\alpha}}$ is {\it not subnormal}, while both $W_\alpha$ and $\widetilde{ W_\alpha}$ are subnormal (Example \ref{ex51}). \ This is the first occurrence in the literature of such a shift, and it exemplifies the significance and usefulness of identifying non-positive charges to represent weighted shifts in a manner resembling (\ref{cmp}). 

The next question, which we call the Subnormality Problem {\bf (SP)}, has been considered in recent papers (cf. \cite[Question 4.1]{exn} and \cite{brz, Curto2019,  SH, SH1}). \\

{\bf (SP)} Under what conditions is the subnormality  of a weighted shift preserved under the Aluthge transform?\\

 The general case remains open and only some partial affirmative results have been obtained in \cite{Curto2019, brz, SH, SH1}. \ For the class of moment infinitely divisible ($\mathcal{MID}$) weighted shifts (i.e., subnormal shifts $W_\alpha$ for which $W_\alpha^t$ remains subnormal for all $0<t<1$), it was proved in \cite{curt2} that the Aluthge transform maps $\mathcal{MID}$ bijectively onto $\mathcal{MID}$, and that $W_\alpha$, its Aluthge transform, and the square root shift are simultaneously either $\mathcal{MID}$ or not $\mathcal{MID}$ (cf. \cite[Corollary 4.7 and Theorem 4.10]{curt2}). \ 
 
 {\bf (SP)} can be reformulated in terms of moment sequences as follows: 
 
 Given a moment sequence $(\gamma_n)_n$ on $K:=[0,M]$, under what conditions is $(\sqrt{\gamma_n\gamma_{n+1}})_n$ also a moment sequence?

 A direct application of Schur's product theorem implies that if $(\sqrt{\gamma_n})_n$ is a moment sequence on $K$, then  $(\sqrt{\gamma_n\gamma_{n+1}})_n$ is also a moment sequence on $K$; this gives, in particular, a sufficient condition to solve {\bf (SP)}. \ The question of whether the reverse implication holds is stated in several recent papers. \ The  next conjecture  appears in \cite[Conjecture 4.6]{Curto2019}:

{\bf Conjecture A}. \ Let $W_\mu$ be a recursively generated (i.e., $\card \supp \mu < \infty$) subnormal weighted shift. \ Then the following statements are equivalent: 
\begin{itemize}
    \item[(i)] \ $W_{\sqrt{\alpha}}$ is subnormal;
    \item[(ii)] \ $\widetilde{ W_\mu}$ is subnormal.
\end{itemize}

\vspace{2pt}
Until now, the treatment of {\bf (SP)} and of Conjecture A has focused on the number of atoms in the $\supp \mu$; let $p:=\card \supp \mu$. \ Using algebraic proofs, affirmative answers to Conjecture A have been obtained for $p\le 6$. \ The case $p=2$ was treated in \cite{SH}, and the case $p=3,4$ in \cite{SH1} where the conjecture appears for the first time; the case $p=5$ was answered in \cite{Curto2019}, and finally the case $p=6$ was solved in \cite{EEEEE}.

In this paper, we provide concrete examples of $p$--atomic measures disproving  Conjecture A, with $p\in \{7,8,9\}$. \ We also recover the case $p\le 6$ as a simple consequence of a new {\it purely analytic} approach. \ The main tools are the Mellin transform and the theory of exponential polynomials developed by J. F. Ritt in the 1920-1930's. 

The organization of the paper is as follows. \ In the next section, we state the definition of the convolution of measures and the main conjecture related to the so-called Square Root Problem {\bf (SRP)}: Given a positive measure $\mu$, under what conditions does there exist a positive measure $\nu$ such that $\nu*\nu= \mu$\,? \ In  Section 3, we exhibit the role of the Mellin transform and the main properties of exponential polynomials as central tools for {\bf (SP)} and {\bf (SRP)}. \ In Section 4, we apply the results obtained in Section 3 to the {\bf (SP)} and {\bf (SRP)} in the case of finitely atomic measures. \ Finally, Section 5 and 6 include some illustrative examples.

\section{Square Roots of Measures}

Given two positive finite measures $\nu$ and $\mu$, let $*$ denote the multiplicative convolution, defined as follows: 
$$
[\nu*\mu](E):=\int_{\mathbb{R}^2} \chi_E(xy) d\nu(x)d\mu(y),
$$
where $\chi_E$ denotes the characteristic function of the Borel set $E$.

It is easy to check that, for any $n\in \Z_+$,
\begin{equation}\label{mom}
  \int_{\R} t^n d(\mu*\nu)(t)=\int_{\R^2} (st)^n d\mu(t)d\nu(s)  =\left( \int_\R t^n d\mu(t) \right) \left( \int_\R s^n d\nu(s) \right).
\end{equation}

In particular, we get $\gamma_n(\mu*\mu)=\gamma_n^2(\mu)$, where 
$\gamma_n(\mu):= \int_\R t^nd\mu(t)$
is the moment of $\mu$ of order $n$. \ The square root problem is usually written as follows:\\

\noindent {\bf (SRP)}: Given a positive measure $\mu$, under what conditions does there exist a positive measure $\nu$ such that $\nu*\nu= \mu$\,?

In the case of compactly supported measures, and thanks to the Weierstrass density theorem and equation \eqref{mom}, the {\bf (SRP)} can be stated in the next simple form. \\

Let $(\gamma_n)_n$ be a moment sequence. \ Under what conditions is  $(\sqrt{\gamma_n})_n$ also a moment sequence?\\

The close relationship between {\bf (SRP)} and  {\bf (SP)} has already been observed in the following proposition from \cite{Curto2019}.

\begin{prop}\label{lemmeconvolution}
	Let $W_\mu$ be a subnormal weighted shift with associated Berger measure  $\mu$. \ Then  $\widetilde{ W_\mu}$ is subnormal if and only if there exists a $ \mathbb{R}^+$-supported probability measure $\nu$ such that $\nu*\nu=\mu*t\mu$.  
\end{prop}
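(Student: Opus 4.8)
The plan is to collapse both sides of the asserted equivalence onto a single statement about moment sequences, using the Berger--Gellar--Wallen theorem on the Aluthge side and the convolution identity \eqref{mom} on the measure side. First I would record the two moment computations that drive everything. Writing $t\mu$ for the measure $t\,d\mu(t)$, its moments are $\int t^{n}\,d(t\mu)(t)=\int t^{n+1}d\mu(t)=\gamma_{n+1}$, so by \eqref{mom} the target measure $\mu*t\mu$ has moments $\gamma_n(\mu*t\mu)=\gamma_n\gamma_{n+1}$. On the other hand, as recalled in the introduction, $\widetilde{W_\mu}=W_{\tilde\alpha}$ is itself a weighted shift whose moments satisfy $\tilde\gamma_n^{\,2}=\tfrac{1}{\alpha_0^2}\gamma_n\gamma_{n+1}$. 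Thus the sequence governing subnormality of $\widetilde{W_\mu}$ and the sequence of moments of $\mu*t\mu$ differ only by the scalar factor $\alpha_0$, and the whole argument is about extracting a positive square root of $(\gamma_n\gamma_{n+1})_n$ at the level of measures.

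Next I would translate the right-hand condition into moment language and back. If an $\R^+$-supported positive measure $\nu$ satisfies $\nu*\nu=\mu*t\mu$, then \eqref{mom} gives $\gamma_n(\nu)^2=\gamma_n(\mu*t\mu)=\gamma_n\gamma_{n+1}$ for all $n$; since $\nu\ge 0$ is supported in $\R^+$ we have $\gamma_n(\nu)\ge 0$, whence $\gamma_n(\nu)=\sqrt{\gamma_n\gamma_{n+1}}=\alpha_0\,\tilde\gamma_n$. Conversely --- and this is the step requiring a non-formal input --- suppose $(\sqrt{\gamma_n\gamma_{n+1}})_n$ is the moment sequence of some $\R^+$-supported positive measure $\nu$. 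Then $\nu*\nu$ and $\mu*t\mu$ have identical moments. Because $W_\mu$ is bounded, $\mu$ is compactly supported, hence so are $t\mu$, $\mu*t\mu$, and any candidate $\nu$ with these (bounded) moments; by the Weierstrass density theorem the moment problem on a compact subset of $\R^+$ is determinate, so agreement of all moments forces $\nu*\nu=\mu*t\mu$ as measures. This determinacy is the only genuine ingredient beyond bookkeeping, and it is precisely what upgrades ``the moments agree'' to ``the measures agree.''

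Finally I would invoke Berger--Gellar--Wallen on the Aluthge side: $\widetilde{W_\mu}$ is subnormal if and only if $(\tilde\gamma_n)_n$ admits a representing probability measure supported in $[0,\|\widetilde{W_\mu}\|^2]\subseteq\R^+$. Since $\tilde\gamma_n=\tfrac{1}{\alpha_0}\sqrt{\gamma_n\gamma_{n+1}}$ and rescaling a measure by the constant $\alpha_0$ rescales every moment by $\alpha_0$, the sequence $(\tilde\gamma_n)_n$ is a moment sequence exactly when $(\sqrt{\gamma_n\gamma_{n+1}})_n$ is, the representing measures being related by $\nu=\alpha_0\,\widetilde\mu$ (both $\R^+$-supported). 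Chaining the three equivalences---subnormality of $\widetilde{W_\mu}$ $\Leftrightarrow$ $(\tilde\gamma_n)_n$ is a moment sequence $\Leftrightarrow$ $(\sqrt{\gamma_n\gamma_{n+1}})_n$ is a moment sequence $\Leftrightarrow$ existence of an $\R^+$-supported $\nu$ with $\nu*\nu=\mu*t\mu$---yields the proposition. I expect the main obstacle to be not depth but normalization bookkeeping: one must track the constant $\alpha_0$ carefully (note $\gamma_0(\nu)=\sqrt{\gamma_0\gamma_1}=\alpha_0$) so that $\nu$ is correctly identified, up to this harmless scaling, with the Berger probability measure $\widetilde\mu$ of the Aluthge transform, and one must invoke compact-support determinacy to pass from equal moments to equal measures.
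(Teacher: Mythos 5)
Your proof is correct and is essentially the argument the result rests on: the paper states Proposition 2.1 without proof, quoting it from \cite{Curto2019}, and the intended route is exactly yours --- compute $\gamma_n(\mu*t\mu)=\gamma_n\gamma_{n+1}$ from \eqref{mom}, match it against $\tilde\gamma_n=\frac{1}{\alpha_0}\sqrt{\gamma_n\gamma_{n+1}}$ via the Berger--Gellar--Wallen theorem, and upgrade equality of moments to equality of measures by compact-support determinacy (Weierstrass density plus the geometric moment bound forcing any representing $\nu$ into $[0,\|W_\alpha\|^2]$), the same mechanism the paper invokes in Section~2 when reformulating \textbf{(SRP)}. Your normalization bookkeeping is in fact sharper than the statement itself: since $\gamma_0(\nu)=\sqrt{\gamma_1}=\alpha_0$, the measure $\nu$ is a probability measure only when $\alpha_0=1$, so ``probability'' in the proposition should read ``finite positive'' (or one normalizes $\alpha_0=1$), precisely the point your closing paragraph flags.
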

It follows from the previous proposition that 
$$ \mu \mbox{ has a square root } \Rightarrow {\widetilde{ W}_\mu} \mbox{ is subnormal} $$.

The question of whether the reverse implication holds is our main motivation. \ We are naturally led to the following recent conjecture from \cite{Curto2019}.  

{\bf Conjecture A} (cf. \cite[Conjecture 4.6]{Curto2019}). \ 
Let $\mu$ be a finitely atomic Berger measure with support in $\R_+$. \ Then the following statements are equivalent:
\begin{itemize}
    \item[(i)] \ $\mu$ has a square root;
    \item[(ii)] \ $\mu*t\mu$ has a square root.
\end{itemize}

We will now use the next technical results from \cite{EEEEE}, associated with the square root problem of measures; this will allow us to simplify our proof.

\begin{lem}\label{121}
    Let $\mu$ be a positive measure such that $0\notin \mathrm{supp}(\mu)$ and let $s>0$. \ Also, let $\mu_s(t):= \mu(st)$  be    the image measures by the mapping $t\to st$, and
    let $\nu:=a\delta_0+\mu$, where $\delta_\alpha$ denotes  the Dirac measure with atom $\alpha$. \ Then the following statements are equivalent:   \begin{enumerate}
        \item \ $\mu$ \; (resp. $\mu*t\mu$) admits a square root;
        \item \ $\nu$ \; (resp. $\nu*t\nu$) admits a square root;
        \item \ $\mu_s$ \; (resp. $\mu_s*t(\mu_s)$) admits a square root.
    \end{enumerate}
\end{lem}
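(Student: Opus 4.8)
The plan is to reduce all three equivalences to two elementary stability properties of the multiplicative convolution: stability of the square-root property under (i) dilation $t\mapsto st$ together with multiplication by a positive scalar, and (ii) the addition of a nonnegative mass at the origin. The first property will yield $(1)\Leftrightarrow(3)$ and the second $(1)\Leftrightarrow(2)$, in each case simultaneously for the plain and the ``$*t$'' versions.

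First I would record the computational dictionary. Writing $\delta_s$ for the Dirac mass at $s$, the definition of $*$ gives $\mu_s=\delta_s*\mu$, together with $\gamma_n(\rho*\sigma)=\gamma_n(\rho)\gamma_n(\sigma)$, $\delta_a*\delta_b=\delta_{ab}$, and $\delta_0*\rho=\|\rho\|\,\delta_0$ for any finite positive $\rho$. Since $\gamma_n(t\rho)=\gamma_{n+1}(\rho)$, one computes $\gamma_n(\mu_s*t\mu_s)=s^{2n+1}\gamma_n(\mu)\gamma_{n+1}(\mu)$, which, by equality of moments for compactly supported measures (Weierstrass), identifies
\begin{equation*}
\mu_s*t\mu_s=s\,\delta_{s^2}*(\mu*t\mu).
\end{equation*}
Two stability facts then follow at once: if $\sigma*\sigma=\rho$ then $(\sqrt c\,\sigma)*(\sqrt c\,\sigma)=c\rho$ for $c>0$ and $(\delta_{\sqrt s}*\sigma)*(\delta_{\sqrt s}*\sigma)=\delta_s*\rho$; conversely, a square root of $c\rho$ (resp. of $\delta_s*\rho$) produces one of $\rho$ after rescaling by $1/\sqrt c$ (resp. convolving with $\delta_{1/\sqrt s}$). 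Applying these to $\mu_s=\delta_s*\mu$ and to the displayed identity gives $(1)\Leftrightarrow(3)$.

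For $(1)\Leftrightarrow(2)$ the heart of the matter is the following claim: if $0\notin\operatorname{supp}(\rho)$ and $a\ge 0$, then $\rho$ admits a square root if and only if $a\delta_0+\rho$ does. To prove it I would write a candidate square root $\sigma$ of $a\delta_0+\rho$ as $\sigma=b\delta_0+\sigma'$ with $b=\sigma(\{0\})\ge0$ and $\sigma'$ supported off the origin, and expand
\begin{equation*}
\sigma*\sigma=\bigl(b^2+2b\|\sigma'\|\bigr)\delta_0+\sigma'*\sigma',
\end{equation*}
using $\delta_0*\delta_0=\delta_0$ and $\delta_0*\sigma'=\|\sigma'\|\delta_0$. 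As $\sigma'*\sigma'$ carries no mass at $0$ (because $\sigma'(\{0\})=0$), comparison with $a\delta_0+\rho$ forces $\sigma'*\sigma'=\rho$ and $b^2+2\|\sigma'\|b=a$; this shows a square root of $a\delta_0+\rho$ yields one of $\rho$. Conversely, given $\sigma'*\sigma'=\rho$, the quadratic $b^2+2\|\sigma'\|b-a=0$ has the nonnegative root $b=-\|\sigma'\|+\sqrt{\|\sigma'\|^2+a}$, and $b\delta_0+\sigma'$ is the desired square root, proving the claim.

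It remains to cast $\nu=a\delta_0+\mu$ into this form in both versions. For the plain version this is immediate with $\rho=\mu$. For the ``$*t$'' version, multiplication by the identity annihilates the atom at the origin, i.e. $t\nu=t\mu$, so that
\begin{equation*}
\nu*t\nu=(a\delta_0+\mu)*t\mu=a\gamma_1(\mu)\,\delta_0+\mu*t\mu,
\end{equation*}
using $\delta_0*(t\mu)=\|t\mu\|\,\delta_0=\gamma_1(\mu)\,\delta_0$; since $0\notin\operatorname{supp}(\mu*t\mu)$, the claim applied with $\rho=\mu*t\mu$ and mass $a\gamma_1(\mu)\ge0$ delivers $(1)\Leftrightarrow(2)$. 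The step requiring genuine care—indeed the crux—is the atom-at-the-origin bookkeeping: one must verify that $\delta_0$ absorbs every off-origin factor into a multiple of itself and that the resulting quadratic constraint on $b$ is always solvable for $a\ge0$. The hypothesis $0\notin\operatorname{supp}(\mu)$ is precisely what guarantees the clean separation between the mass at the origin and the remainder that makes this bookkeeping go through.
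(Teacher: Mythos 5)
Your proof is correct, but note that the paper itself does not prove this lemma at all: it is imported verbatim from \cite{EEEEE} ("We will now use the next technical results from \cite{EEEEE}\dots"), so your argument is a self-contained replacement rather than a variant of an in-paper proof. Your two reductions are exactly the right ones, and the computations check out: $\mu_s=\delta_s*\mu$, the absorption rules $\delta_0*\delta_0=\delta_0$ and $\delta_0*\sigma'=\|\sigma'\|\,\delta_0$, the expansion $\sigma*\sigma=(b^2+2b\|\sigma'\|)\delta_0+\sigma'*\sigma'$ with $(\sigma'*\sigma')(\{0\})=0$ whenever $\sigma'(\{0\})=0$, and the solvability of $b^2+2\|\sigma'\|b=a$ in $b\ge 0$ for $a\ge 0$ all hold, as does $\nu*t\nu=a\gamma_1(\mu)\delta_0+\mu*t\mu$. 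One small caveat: to identify $\mu_s*t\mu_s$ with $s\,\delta_{s^2}*(\mu*t\mu)$ you invoke equality of moments plus Weierstrass, which presupposes compact support (moment determinacy), an assumption the lemma as stated does not make; in the paper's setting (Berger measures of bounded shifts) this is automatic, but you can avoid the issue entirely by checking directly from the definition of $*$ that $t(\delta_s*\mu)=s\,\bigl(\delta_s*(t\mu)\bigr)$, whence $\mu_s*t\mu_s=(\delta_s*\mu)*\bigl(s\,\delta_s*(t\mu)\bigr)=s\,\delta_{s^2}*(\mu*t\mu)$ by bilinearity, commutativity and $\delta_s*\delta_s=\delta_{s^2}$. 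A second minor observation: your closing remark slightly overstates the role of the hypothesis $0\notin\operatorname{supp}(\mu)$ --- your bookkeeping only ever uses $\mu(\{0\})=0$ (together with $0\notin\operatorname{supp}(\mu*t\mu)$, which does use the support hypothesis), so your argument in fact proves a marginally stronger statement. What your route buys is transparency: everything reduces to two elementary stability properties of the multiplicative convolution (dilation-plus-scaling, and adding mass at the origin), with the quadratic in $b$ making the origin-atom direction constructive in both senses.
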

\medskip
Without loss of generality, hereafter we will assume that $
x_0:=\min(\textrm{supp}(\mu))=1$; in particular, the support of $\mu$ will be contained in $[1,+\infty)$.

\bigskip
We conclude this section with a diagram that illustrates how various conditions for measures and charges are related to the subnormality of $W_\alpha$, $W_{\sqrt{\alpha}}$ and $\widetilde{W_\alpha}$.

($\mathcal{MID}$ level):  
$$
W_{\sqrt{\alpha}} \textrm{ is } \mathcal{MID}
\Longleftrightarrow
W_{\alpha} \textrm{ is } \mathcal{MID}
\Longleftrightarrow
W_{\widetilde{\alpha}} \textrm{ is } \mathcal{MID}.
$$

\bigskip
(Subnormal level: $W_\alpha \sim \mu$)

\[\begin{array}{ccc} 
\framebox[1.2\width][c]{$W_{\sqrt{\alpha}}$ \textrm{ is subnormal }} \par & \mbox{\Huge{$\Longleftrightarrow$}}
 & \framebox[1.2\width][c]{$\mu = \nu * \nu, \textrm{ with } \nu \ge 0$} \par 
  \\ & & \\ \mbox{\Huge{$\Downarrow$}} & & \mbox{\Huge{$\Downarrow$}} 
  \\ & & \\ 
  \framebox[1.2\width][c]{$\widetilde{W_\alpha} \textrm{ is subnormal }$} \par & \overset{\textrm{Prop. 2.1} \vspace{4pt}}{\mbox{ \Huge{$\Longleftrightarrow$}}}
 & \framebox[1.1\width][c]{$\mu * t \mu = \xi * \xi, \textrm{ with } \xi \ge 0$} \par \\ & & \\ 
 \mbox{\Huge{$\not\Downarrow$}} (\textrm{Ex. 5.1})  & & \\
 & & \\
 \framebox[1.2\width][c]{$W_{\sqrt{\alpha}} \textrm{ is subnormal }$} \par & &
 \end{array}\]

\bigskip

\begin{rem}
(i) In the above diagram, Example 5.1 is the first known example in the literature. \ \newline
(ii) Since in the above diagram we are assuming that $W_\alpha$ is subnormal, Example 5.1 also exhibits the first example of a subnormal weighted shift with subnormal Aluthge transform for which the square root shift $W_{\sqrt{\alpha}}$ is not subnormal, and therefore not $\mathcal{MID}$. \newline
(iii) For $\mu \ge 0$, one can always find a charge $\rho$ such that $\mu = \rho * \rho$. \newline
(iv) \ Moreover, in Example 6.1 we will prove that the subnormality of $\widetilde{W_\alpha}$ does not necessarily imply the subnormality of $W_\alpha$; indeed, it is possible to find a non-positive charge $\rho$ such that $\rho* t \rho = \nu * \nu$, where $\nu \ge 0$. \newline
(v) In general, it is not true that $\rho * t \rho \ge 0$ implies $\rho \ge 0$; that is, the map $\rho \mapsto \rho * t \rho$ cannot be used as a test for the positivity of $\rho$. 
\end{rem}
\bigskip

\section{The Mellin Transform and Its Relationship to the Aluthge Transform}

Let $\mu$ be a finite positive Radon measure. \ The {\it Mellin transform} $\mathcal{M}_\mu$ is defined as
\begin{equation}\label{Mellindef}
    \mathcal{M}_\mu(z):=\int_{\mathbb{R}_+^*} t^z d\mu(t). 
\end{equation}

We now let 
$$
\mathcal{M}_+(\mathbb{R}_+):= \{ \mathcal{M}_\mu : \mu  \mbox{ is a finite positive Radon measure supported in } \mathbb{R}_+\}.
$$
Using the Perron inversion formula, as in, e.g., \cite[VI Theorem 9b]{Wid}, one can establish that the Mellin transform is one-to-one, and thus it characterizes the measure.
We also have
$$
\mathcal{M}_{\mu*\nu}(z)=\int_{\mathbb{R}_+^2} (uv)^z d\mu(u)d\nu(v)=\int_{\mathbb{R}_+} u^z d\mu(u)\int_{\mathbb{R}_+} v^z d\nu(v)=\mathcal{M}_{\mu}(z)\mathcal{M}_{\nu}(z).
$$
Conjecture A is then equivalent to: \newline \vspace{4pt} {\bf Conjecture B}
\begin{equation} \label{Conjecture B}
\begin{array}{c}
    \mathcal{M}_{\mu*t\mu}(z)=[\mathcal{M}_{\nu}(z)]^2 \\
     \Updownarrow\\
     \textrm{ there exists } \xi \in \mathcal{M}_+(\mathbb{R}_+) \textrm{ with } \mathcal{M}_{\mu}(z)=[\mathcal{M}_{\xi}(z)]^2.  
\end{array}   
\end{equation}

 In addition, we remark that  $$
\mathcal{M}_{t\mu}(z)=\int_{\mathbb{R}_+} t^{z+1} d\mu(t)=\mathcal{M}_{\mu}(z+1),
$$
 for every $z\in\C$ and hence 
$$
\mathcal{M}_{\mu*t\mu}(z)=\mathcal{M}_{\mu}(z)\mathcal{M}_{\mu}(z+1).
$$

Finally, Conjecture A is also equivalent to: \newline
\vspace{4pt} {\bf Conjecture C} \\
\begin{equation} \label{Conjecture C}
\begin{array}{c}
    \mathcal{M}_{\mu}(z)\mathcal{M}_{\mu}(z+1)=[\mathcal{M}_{\nu}(z)]^2 \\
    \Updownarrow\\
    \textrm{ there exists } \xi \in \mathcal{M}_+(\mathbb{R}_+) / \mathcal{M}_{\mu}(z)=[\mathcal{M}_{\xi}(z)]^2. 
\end{array}  
\end{equation}

For $\mu=\sum_{k\ge0}  a_k\delta_{x_k}$ a finite discrete (positive) measure  with compact support in the interval $(0,+\infty)$,  the Mellin transform of $\mu$ is the Dirichlet series
\begin{equation}
    \mathcal{M}_\mu(z)=\sum_{k\ge 0}  a_k{x_k}^z=\sum_{k\ge 0}  a_ke^{z\ln(x_k)}, \label{Mellin}
\end{equation}
which converges uniformly on every compact set of the complex plane. \ Indeed, for every $R>0$ we have
$$
\sum_{k\ge 0} \sup_{z\in D(0,R)}|a_k{x_k}^z|=\sum_{k\ge 0} a_k \sup_{z\in D(0,R)}e^{\Re(z)\ln{x_k}}\le \sum_{k\ge 0} a_k e^{R|\ln{x_k}|}\leq \|\mu\|e^{R\ln{M}},
$$
where $\|\mu\|:=\sum_{k\ge 0} a_k $ stands for the total variation of $\mu$ and $M$ is a positive number such that $|\ln x_k| \leq \ln M$, for every  $k\in\Z_+$. \ In particular $\mathcal{M}_\mu(z)$ is an entire function.

To deal with our main problem, we study assertion \eqref{Conjecture B}. To this end, we need two auxiliary results.
\begin{lem}\label{lem22} Let $\mu=\sum_{k=0}^\infty a_k\delta_{x_k}$ be a positive compactly supported measure in ${\mathbb R}_+$ such that $x_{i_0}=x_{\textrm{min}}:=\inf(\mathrm{supp}(\mu))$ and $x_{i_1}=x_{\textrm{max}}:=\sup(\mathrm{supp}(\mu))$ are isolated in $ \mathrm{supp}(\mu)$. \ Then 
$\mathcal{Z}(\mathcal{M}_{\mu})$, the zero set of $\mathcal{M}_\mu$,  has a bounded real part.
\end{lem}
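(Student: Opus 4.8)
The plan is to work directly with the Dirichlet-series form \eqref{Mellin} and show that no zero of $\mathcal{M}_\mu$ can have very large or very small real part: the extreme atoms $x_{\max}$ and $x_{\min}$ dominate the series in the half-planes $\Re(z)\to+\infty$ and $\Re(z)\to-\infty$, respectively. Writing $z=x+iy$ and recalling that $|x_k^z|=x_k^{x}$ for each positive atom, the decisive observation is that the \emph{isolation} of $x_{\max}$ and $x_{\min}$ in $\supp(\mu)$ yields a genuine spectral gap, which is exactly what makes the leading term overwhelm the remainder uniformly in $y$.

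First I would treat the half-plane $\Re(z)\gg 0$. Let $i_1$ be the index with $x_{i_1}=x_{\max}$. Since $x_{\max}$ is isolated in the (compact) support, there is a radius $r<x_{\max}$ with $x_k\le r$ for every other atom; hence $\rho:=r/x_{\max}<1$ and $x_k/x_{\max}\le\rho$ for all $k\neq i_1$. Factoring out $x_{\max}^z$ gives
$$
\mathcal{M}_\mu(z)=x_{\max}^z\Bigl(a_{i_1}+\sum_{k\neq i_1}a_k\bigl(x_k/x_{\max}\bigr)^z\Bigr),
$$
and for $x=\Re(z)>0$ the tail obeys
$$
\Bigl|\sum_{k\neq i_1}a_k\bigl(x_k/x_{\max}\bigr)^z\Bigr|\le\sum_{k\neq i_1}a_k\,\rho^{x}\le\|\mu\|\,\rho^{x}.
$$
Because $\rho<1$, the right-hand side tends to $0$ as $x\to+\infty$, so there is $B>0$ with $\|\mu\|\rho^{x}<a_{i_1}$ whenever $x>B$; for such $z$ the parenthesized factor is bounded below by $a_{i_1}-\|\mu\|\rho^{x}>0$, and since $x_{\max}^z\neq 0$ we conclude $\mathcal{M}_\mu(z)\neq 0$.

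The half-plane $\Re(z)\ll 0$ is symmetric. Writing $i_0$ for the index with $x_{i_0}=x_{\min}$ and using the isolation of $x_{\min}$, I obtain $\ell>x_{\min}$ with $x_k\ge\ell$ for all $k\neq i_0$, so $\sigma:=\ell/x_{\min}>1$ and $x_k/x_{\min}\ge\sigma$. Factoring out $x_{\min}^z$ and noting that $(x_k/x_{\min})^{x}\le\sigma^{x}$ for $x<0$, the same estimate gives a tail bounded by $\|\mu\|\,\sigma^{x}\to 0$ as $x\to-\infty$; hence there is $A$ with $\mathcal{M}_\mu(z)\neq 0$ for $\Re(z)<A$. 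Combining the two half-planes, every zero satisfies $A\le\Re(z)\le B$, which is the assertion.

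I expect the only genuinely delicate point to be the uniform control of the possibly infinite tail, and this is precisely where the isolation hypothesis is indispensable: it upgrades the strict pointwise inequalities $x_k<x_{\max}$ and $x_k>x_{\min}$ into the uniform gaps $\rho<1$ and $\sigma>1$, which in turn force the geometric bounds $\|\mu\|\rho^{x}$ and $\|\mu\|\sigma^{x}$ to decay. Without isolation—if, for instance, the atoms accumulated at $x_{\max}$—no such uniform $\rho$ would exist and the dominance of the extreme term could break down, so the hypothesis cannot simply be dropped.
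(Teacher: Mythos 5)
Your proposal is correct and follows essentially the same route as the paper: factor out the extreme atom $x_{\max}^z$ (resp.\ $x_{\min}^z$), use the isolation hypothesis to control the remaining terms, and conclude that $\mathcal{M}_\mu(z)\neq 0$ for $\Re(z)$ large positive (resp.\ large negative). Your version is in fact slightly more explicit than the paper's, since you extract the uniform gap $\rho<1$ (resp.\ $\sigma>1$) and bound the tail geometrically by $\|\mu\|\rho^{\Re(z)}$ uniformly in $\Im(z)$, where the paper simply asserts the limit of the tail is $0$.
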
 

\begin{proof} We have 
$$\mathcal{M}_\mu(z)=\sum_{k\ge 0}  a_kx_k^z =a_{i_1}x_{i_1}^z\left[1+ \sum_{k\ne i_1}  \frac{a_k}{a_{i_1}}(\frac{x_k}{x_{i_1}})^z \right], $$ 
and since $$\lim\limits_{\Re(z)\to +\infty}\sum\limits_{k\ne i_1}| \frac{a_k}{a_{i_1}}(\frac{x_k}{x_{i_1}})^z| = \lim\limits_{\Re(z)\to +\infty}\sum\limits_{k\ne i_1}  \frac{a_k}{a_{i_1}}e^{\ln(\frac{x_k}{x_{i_1}})\Re(z)}=0, $$
we deduce that $\mathcal{M}_\mu(z)\ne 0$ for $\Re(z)$ large enough. \ Using $x_{min}$ instead of $x_{max}$, we obtain  similarly that $\mathcal{M}_\mu(z)\ne 0$ for $-\Re(z)$  large enough. \ This completes the proof.
\end{proof}

\begin{prop}\label{lem21} Let $W_\mu$ be a subnormal weighted shift, with $\mu$ a discrete Berger measure as in the previous lemma, and assume that $\widetilde{ W_\mu}$ is also subnormal. \ Let $(z_k,m_k)_k$ the family of zeros, and respective multiplicities, of 
    $\mathcal{M}_{\mu}(z)$. \ Then $m_k$ is even for every $k$. 
\end{prop}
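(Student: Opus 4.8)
The plan is to convert the assumed subnormality of $\widetilde{W_\mu}$ into a multiplicative functional equation for the Mellin transform, and then to run a parity-of-zeros argument driven by Lemma~\ref{lem22}. First I would invoke Proposition~\ref{lemmeconvolution}: since $\widetilde{W_\mu}$ is subnormal, there is a positive measure $\nu$ with $\nu*\nu=\mu*t\mu$. Because $\operatorname{supp}(\mu)$ is a compact subset of $(0,+\infty)$ (recall $\operatorname{supp}(\mu)\subseteq[1,+\infty)$), the measure $\mu*t\mu$ is supported in a compact subset of $(0,+\infty)$; as $\nu\ge0$ there is no cancellation in $\nu*\nu$, so $\operatorname{supp}(\nu*\nu)=\overline{\operatorname{supp}(\nu)\cdot\operatorname{supp}(\nu)}$ contains every square $s^2$ with $s\in\operatorname{supp}(\nu)$. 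Hence $\operatorname{supp}(\nu)$ is itself contained in a compact subset of $(0,+\infty)$, and since $\nu$ is a finite measure its Mellin transform $\mathcal{M}_\nu$ is entire.

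Next I would record the key identity. Using the multiplicativity of the Mellin transform under convolution, together with $\mathcal{M}_{t\mu}(z)=\mathcal{M}_\mu(z+1)$, the relation $\nu*\nu=\mu*t\mu$ yields
$$\mathcal{M}_\mu(z)\,\mathcal{M}_\mu(z+1)=\big[\mathcal{M}_\nu(z)\big]^2\qquad(z\in\C).$$
Writing $F:=\mathcal{M}_\mu$ and comparing orders of vanishing at an arbitrary point $w\in\C$ (all functions involved being entire), the right-hand side has even order, so
$$\operatorname{ord}_w(F)+\operatorname{ord}_{w+1}(F)=2\,\operatorname{ord}_w(\mathcal{M}_\nu)\equiv0\pmod2.$$
Thus $\operatorname{ord}_w(F)\equiv\operatorname{ord}_{w+1}(F)\pmod2$ for every $w$, and by iteration $\operatorname{ord}_{z_0}(F)\equiv\operatorname{ord}_{z_0+n}(F)\pmod2$ for every zero $z_0$ and every $n\in\N$.

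Finally, I would close the argument with Lemma~\ref{lem22}, which provides $R>0$ such that $\mathcal{Z}(F)\subseteq\{z:|\Re z|\le R\}$. Given a zero $z_0$, choose $n\in\N$ with $\Re(z_0)+n>R$; then $z_0+n\notin\mathcal{Z}(F)$, so $\operatorname{ord}_{z_0+n}(F)=0$. Combined with the parity invariance established above, this forces $\operatorname{ord}_{z_0}(F)$ to be even, i.e.\ $m_k$ is even for every $k$.

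The computation is otherwise routine; the substantive point—and the place where the hypotheses are genuinely used—is the interplay between the \emph{integer-shift} structure of the functional equation $F(z)F(z+1)=[\mathcal{M}_\nu(z)]^2$ and the \emph{horizontal boundedness} of $\mathcal{Z}(F)$ from Lemma~\ref{lem22}: the unit shift propagates parity information to the right, while the bounded real part guarantees that this information eventually reaches a non-zero of $F$, where the parity is trivially even. The only technical step requiring care is verifying that $\nu$ produces an entire Mellin transform, which is handled in the first paragraph.
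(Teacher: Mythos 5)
Your proof is correct and follows essentially the same route as the paper: both derive the functional equation $\mathcal{M}_\mu(z)\mathcal{M}_\mu(z+1)=[\mathcal{M}_\nu(z)]^2$ from Proposition~\ref{lemmeconvolution}, extract the parity relation $m(z,\mathcal{M}_\mu)+m(z+1,\mathcal{M}_\mu)\equiv 0 \pmod 2$ (the paper via logarithmic derivatives and the argument principle, you by comparing vanishing orders directly), and conclude by propagating parity along integer shifts until the horizontal boundedness of $\mathcal{Z}(\mathcal{M}_\mu)$ from Lemma~\ref{lem22} forces evenness. Your one-directional shift suffices just as well as the paper's two-sided induction, and your opening verification that $\operatorname{supp}(\nu)$ is compact in $(0,+\infty)$ (so that $\mathcal{M}_\nu$ is entire) is a small technical point the paper leaves implicit.
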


\begin{proof}
    From Equation \eqref{Conjecture B}, we have $\mathcal{M}_{\mu}(z)\mathcal{M}_{\mu}(z+1)=[\mathcal{M}_{\nu}(z)]^2$. \ On the set $\Omega\subset\mathbb{C}$ where $\mathcal{M}_{\mu}(z)$ is holomorphic, we obtain:
 $$    2\frac{\mathcal{M}_{\nu}'(z)}{\mathcal{M}_{\nu}(z)}=\frac{\mathcal{M}_{\mu}'(z)}{\mathcal{M}_{\mu}(z)}+\frac{\mathcal{M}_{\mu}'(z+1)}{\mathcal{M}_{\mu}(z+1)}.
 $$
Using Cauchy's argument principle, we derive that
\begin{equation}\label{zeros}
2 m(z,\mathcal{M}_{\nu}(z))=m(z,\mathcal{M}_{\mu})+m(z+1,\mathcal{M}_{\mu}),
\end{equation}
where $m(z,f)$ is the  multiplicity of the zero $z$ in $f$ (with $m(z,f)=0$ if $z$ is not a zero of $f$).

Seeking a contradiction, assume that the zero set of odd multiplicity  $ \mathcal{Z}_{odd}(\mathcal{M}_\mu)$ is nonempty and let $z\in \mathcal{Z}_{odd}(\mathcal{M}_\mu)$. \ From Equation \eqref{zeros}, we derive that $\{z-1, z+1\}\subset \mathcal{Z}_{odd}(\mathcal{M}_\mu)$ and thus, by induction, $z+\mathbb{Z}\subseteq \mathcal{Z}_{odd}(\mathcal{M}_\mu)$. \ This last statement is false, using Lemma \ref{lem22}. \ This completes the proof.
\end{proof}

We now derive the next preparatory result.

\begin{prop} Let $W_\mu$ be a subnormal weighted shift, with $\mu$ a discrete Berger measure as in the previous lemma, and assume that $\widetilde{ W_\mu}$ is also subnormal. \ Assume also that the assumptions of Lemma \ref{lem22} are satisfied. \ Then 
$$
\mathcal{M}_{\mu}(z)=[H(z)]^2,
\qquad z\in\C,$$
for some entire function  $H$.
\end{prop}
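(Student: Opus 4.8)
The plan is to produce $H$ as a \emph{globally defined} holomorphic square root of $\mathcal{M}_\mu$, exploiting the fact, established in Proposition \ref{lem21}, that every zero of the entire function $\mathcal{M}_\mu$ has even multiplicity, together with the simple connectivity of $\mathbb{C}$. First I would recall that $\mathcal{M}_\mu$ is entire and not identically zero, by the uniform convergence of the Dirichlet series \eqref{Mellin}; hence its zero set $Z=\{z_k\}_k$ is discrete, and by Proposition \ref{lem21} each $z_k$ has even multiplicity $m_k=2n_k$ with $n_k\in\mathbb{N}$. The only genuine content beyond this is to upgrade the local (pointwise) extraction of a square root to a single entire function.

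The central object is the logarithmic derivative. I would set $\phi:=\tfrac12\,\mathcal{M}_\mu'/\mathcal{M}_\mu$, a meromorphic function on $\mathbb{C}$ whose only singularities are simple poles at the points $z_k$, with residue exactly $n_k$. Since each residue is an \emph{integer}, the residue theorem gives $\oint_\Gamma \phi\,dz\in 2\pi i\,\mathbb{Z}$ for every closed curve $\Gamma$ in $\mathbb{C}\setminus Z$. Fixing a base point $w_0\notin Z$ and a value $\mathcal{M}_\mu(w_0)^{1/2}$, I would then define
$$H(z):=\mathcal{M}_\mu(w_0)^{1/2}\,\exp\!\Big(\int_{w_0}^{z}\phi(w)\,dw\Big),\qquad z\in\mathbb{C}\setminus Z,$$
where the integral is taken along any path in $\mathbb{C}\setminus Z$ joining $w_0$ to $z$.

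Three verifications then complete the argument. First, $H$ is well defined: changing the path alters the integral by an element of $2\pi i\,\mathbb{Z}$, which leaves $\exp$ unchanged, so $H$ is single-valued and holomorphic on $\mathbb{C}\setminus Z$. Second, the putative singularities are removable: near $z_k$ one has $\phi(w)=n_k/(w-z_k)+(\text{holomorphic})$, whence $\exp(\int\phi)$ behaves like $(z-z_k)^{n_k}$ times a nonvanishing factor and extends holomorphically across $z_k$ with a zero of order $n_k$; thus $H$ extends to an entire function. Third, $H'/H=\phi=\tfrac12\,\mathcal{M}_\mu'/\mathcal{M}_\mu$ shows that $H^2$ and $\mathcal{M}_\mu$ share the same logarithmic derivative, so $\mathcal{M}_\mu/H^2$ is constant; the normalization at $w_0$ forces this constant to equal $1$, giving $\mathcal{M}_\mu=H^2$ on all of $\mathbb{C}$.

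I expect the main obstacle to be exactly the passage from the local to the global square root: locally, even multiplicity lets one extract a holomorphic square root near every point, but patching these determinations together could fail through monodromy around the (possibly infinitely many) zeros, which by Lemma \ref{lem22} accumulate only within a vertical strip. What rescues the construction is that even multiplicity makes every residue of $\phi$ an integer, so all periods of $\int\phi$ lie in $2\pi i\,\mathbb{Z}$ and are annihilated by the exponential, while the simple connectivity of $\mathbb{C}$ rules out any further topological obstruction. An alternative route would invoke the Hadamard factorization of the order-one function $\mathcal{M}_\mu$ and pair up its even-order zeros, but the logarithmic-derivative argument is cleaner, as it avoids tracking the genus of the canonical product and the accompanying exponential normalizing factors.
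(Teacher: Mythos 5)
Your proof is correct, but it takes a genuinely different route from the paper. The paper's argument is a (very terse) factorization argument: since by Proposition \ref{lem21} every zero of $\mathcal{M}_\mu$ has even multiplicity, one factors out $(z-z_k)^{m_k}$ directly when the zero set is finite, or invokes the Weierstrass factorization theorem when it is infinite, pairs up the factors, and takes a square root of the remaining nonvanishing entire part (an exponential $e^{h}$, whose root is $e^{h/2}$). You instead integrate half the logarithmic derivative $\phi=\tfrac12\,\mathcal{M}_\mu'/\mathcal{M}_\mu$ and exponentiate, observing that evenness of the multiplicities makes every residue of $\phi$ an integer, so all periods lie in $2\pi i\,\mathbb{Z}$ and are killed by $\exp$; removability at each $z_k$ and the logarithmic-derivative comparison (using connectedness of $\mathbb{C}\setminus Z$) then yield $\mathcal{M}_\mu=H^2$ globally. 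Your approach buys uniformity and completeness: it treats the finite and infinite zero sets by one argument and avoids the bookkeeping a careful Weierstrass proof requires (pairing elementary factors of the right genus and separately extracting a root of the exponential factor) --- a nontrivial point here, since $\mathcal{M}_\mu$ is an exponential polynomial, which for $p\ge 1$ has infinitely many zeros, so the paper's ``simple factorization'' case is essentially never the operative one. What the paper's route buys is brevity and a more constructive flavor, making the zero divisor of $H$ visible at a glance. Two small remarks: the non-vanishing of $\mathcal{M}_\mu$ is best justified by $\mathcal{M}_\mu(0)=\|\mu\|>0$ rather than by convergence of the series alone, and the strip localization of $Z$ from Lemma \ref{lem22}, while true, is not actually needed in your argument --- discreteness of $Z$ suffices for the residue count along any closed curve.
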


\begin{proof}
    Given a set $I \subseteq \mathbb{Z}_+$, let $ \mathcal{Z}(\mathcal{M}_{\mu}):= \{(z_k,m_k), \: \: k \in I\}$. \  Since the multiplicities of all zeros of the entire function $\mathcal{M}_{\mu}(z)$ are even (using a simple factorization by $(z-z_k)^{m_k}$ in the finite case, or using the Weierstrass factorization theorem in the infinite case), we obtain the desired result.
    \end{proof}
    
To reach our main theorem, we need one more auxiliary result. \ Consistent with the prevailing terminology, we will refer to signed measures as {\it charges}; these are Borel measures that are not necessarily positive. \ Thus, a charge $\xi$ typically admits either an atom with negative density or a Borel set $E$ for which $\xi(E)<0$.

\begin{thm}\label{signedroot} Let $W_\mu$ be a subnormal weighted shift, with $\mu$ a discrete Berger measure, and assume that $\widetilde{ W_\mu}$ is also subnormal. \ Then there exists a finitely atomic charge $\xi$ supported in $[1,+\infty)$ such that 
$$
H(z)=\mathcal{M}_{\xi}(z).
$$
In particular, $\xi*\xi=\mu$.
\end{thm}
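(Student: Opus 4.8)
The plan is to show that the entire square root $H$ furnished by the previous proposition is itself the Mellin transform of a finitely atomic charge, and then to recover $\xi*\xi=\mu$ from the injectivity of the Mellin transform. First I would record the structural facts about $\mathcal{M}_\mu$: after the normalization $x_{\min}=1$ (Lemma \ref{121}), we may write
\begin{equation*}
\mathcal{M}_\mu(z)=\sum_{k=0}^{p-1}a_k x_k^z=\sum_{k=0}^{p-1}a_k e^{(\ln x_k)z},\qquad 1=x_0<x_1<\cdots<x_{p-1}=M,
\end{equation*}
with all $a_k>0$, so that $\mathcal{M}_\mu$ is an exponential polynomial whose frequencies $\ln x_k$ are real and lie in $[0,\ln M]$. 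In these terms it suffices to prove that $H$ is again an exponential polynomial $H(z)=\sum_j b_j e^{\mu_j z}$ with real coefficients $b_j$ and real frequencies $\mu_j\in[0,\tfrac12\ln M]$: setting $y_j:=e^{\mu_j}\in[1,\sqrt M]$ and $\xi:=\sum_j b_j\,\delta_{y_j}$ then produces a finitely atomic charge supported in $[1,+\infty)$ with $\mathcal{M}_\xi=H$.

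The crux — and the step I expect to be the main obstacle — is to show that $H$ is an exponential polynomial at all, rather than merely an entire function of exponential type. Here I would invoke Ritt's factorization theory for exponential polynomials: $\mathcal{M}_\mu$ admits an essentially unique factorization $\mathcal{M}_\mu=c\,e^{\lambda z}\prod_i P_i^{\,n_i}$ into a monomial unit and irreducible exponential polynomials $P_i$, all carrying real frequencies. By Proposition \ref{lem21} every zero of $\mathcal{M}_\mu$ has even multiplicity; matching this with the zero sets of the $P_i$ forces each $n_i$ to be even, while the unit $c\,e^{\lambda z}$ is automatically a square. Consequently $\sqrt{c}\,e^{\lambda z/2}\prod_i P_i^{\,n_i/2}$ is an exponential polynomial whose square equals $\mathcal{M}_\mu$; since any two entire square roots of a fixed entire function differ by a global sign (the ratio is $\pm1$ on the connected complement of an isolated zero set), this exponential polynomial coincides with $\pm H$, so $H$ is an exponential polynomial. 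I would take particular care at this point to justify that the entire square root supplied by the previous proposition agrees with the one extracted inside the ring of exponential polynomials — this is exactly where the uniqueness built into Ritt's theory does the essential work.

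It then remains to pin down the frequencies and coefficients. Reality is immediate: for real $z$ we have $\mathcal{M}_\mu(z)=\int t^z\,d\mu(t)>0$, so $H$ may be taken real and (nonvanishing, hence) of constant sign on $\mathbb{R}$; the reflection principle gives $\overline{H(\bar z)}=H(z)$, which together with the factorization above forces the $\mu_j$ and $b_j$ to be real. For the location of the frequencies I would compare extreme terms: ordering $\mu_1<\cdots<\mu_m$, the polynomial $H^2=\mathcal{M}_\mu$ has bottom frequency $2\mu_1$ and top frequency $2\mu_m$ (with nonzero coefficients $b_1^2,b_m^2$, so no cancellation at the ends), and these must equal $0$ and $\ln M$ respectively. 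Hence $\mu_1=0$, $\mu_m=\tfrac12\ln M$, so every $\mu_j\in[0,\tfrac12\ln M]$ and every $y_j\in[1,\sqrt M]\subset[1,+\infty)$.

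Finally, with $\xi=\sum_j b_j\,\delta_{y_j}$ in hand, the convolution identity $\mathcal{M}_{\xi*\xi}=(\mathcal{M}_\xi)^2=H^2=\mathcal{M}_\mu$, combined with the injectivity of the Mellin transform (Perron's inversion formula), yields $\xi*\xi=\mu$, completing the proof.
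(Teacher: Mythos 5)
Your reduction (normalize so $x_{\min}=1$, show the entire square root $H$ is an exponential polynomial with real nonnegative frequencies, then read off $\xi$ and conclude via injectivity of the Mellin transform) follows the paper's skeleton, but at the crux you replace the paper's tool with Ritt's \emph{factorization} theory, and that replacement has a genuine gap. Your key inference --- ``every zero of $\mathcal{M}_\mu$ has even multiplicity, so matching with the zero sets of the irreducible factors $P_i$ forces each $n_i$ to be even'' --- is not supported by known results. First, Ritt's factorization is into a unit, \emph{simple} factors (polynomials in a single exponential $e^{\mu z}$), and irreducibles; you omit the simple factors, though those can be repaired. More seriously, the ring of exponential polynomials does not behave like $\mathbb{C}[z]$ with respect to zeros: an irreducible exponential polynomial can have multiple zeros, and it is \emph{not known} that two non-associate irreducibles have essentially disjoint zero sets --- the assertion that infinitely many common zeros forces a common factor is precisely the Shapiro conjecture, still open in general. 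Hence even multiplicity of every zero of $c\,e^{\lambda z}\prod_i P_i^{\,n_i}$ does not let you conclude that each $n_i$ is even: the evenness could a priori be assembled from odd contributions of several distinct factors sharing zeros. A secondary gap: your reflection-principle argument only shows that the pairs $(\mu_j,b_j)$ are permuted by conjugation, i.e., frequencies come in conjugate pairs; you need them \emph{real} before you can order them and run the extreme-frequency comparison.

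The paper avoids both problems with a different theorem of Ritt, namely \cite[Theorem I]{Ritt} (Theorem \ref{rittt} in the paper): any function analytic in a circular sector of opening greater than $\pi$ that satisfies a monic algebraic equation with exponential-polynomial coefficients is itself an exponential polynomial, with exponents that are rational linear combinations of the given exponents. Since $H$ is entire and satisfies $H^2-\mathcal{M}_\mu=0$, this applies directly and yields at once that $H(z)=\sum_k b_k e^{y_k z}$ with all $y_k$ real (rational combinations of the $\ln x_k$) --- no factorization, no zero-matching, and reality of the frequencies for free; the even-multiplicity input (Proposition \ref{lem21}) is used only once, to produce the entire square root $H$ in the first place. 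From there the paper argues essentially as you do at the end: uniqueness of the representation of exponential polynomials gives $\{y_k+y_l\}=\{\ln x_0,\dots,\ln x_p\}$, the relation $2y_0=\ln x_0=0$ forces all $y_k\ge 0$, and injectivity of the Mellin transform gives $\xi*\xi=\mu$. If you substitute Ritt's Theorem I for your factorization paragraph, the remainder of your proposal is correct and matches the paper.
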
 

\begin{proof}
Consider $\mu=\sum_{k=0}^p a_k\delta_{x_k}$ such that  $\textrm{supp}(\mu)= \{1=x_0<x_1<\cdots <x_p \}$.

The Mellin transform $\mathcal{M}_\mu$ is an exponential polynomial with nonnegative exponents:
$$
\mathcal{M}_{\mu}(z)=\sum_{k=0}^p a_k{x_k}^z=\sum_{k=0}^p a_k e^{z\ln(x_k)}.
$$
From the previous  discussion,  there exists an entire function $H$ satisfying $H(z)^2= \sum_{k=0}^p a_k{x_k}^z$, for all $z\in\C$.

Next, we use a suitable version of a theorem due to J.F. Ritt. \ As a consequence, we prove that the square root of a positive Borel measure always exists, if we allow charges as solutions.

\begin{thm}\cite[Theorem I]{Ritt}\label{rittt}
    Let $P_k$ be exponential polynomials and  $f$ be an analytic solution, in a circular sector with opening greater than $\pi$, of the equation 
    $$
    f^n+P_{n-1}f^{n-1}+\dots+P_0=0.
    $$
    Then $f$ is also an exponential polynomial, whose exponents are linear combinations of the exponents in the $P_k$'s, and with rational coefficients.
\end{thm}

In particular, from the equation $$
H(z)^2= \sum_{k=0}^p a_k{x_k}^z :=-P_0,
$$
it follows that $H(z)$  is also an exponential polynomial. \ That is, there exist $b_i\in \mathbb{C}$ and $y_i\in \mathbb{R}$, such that $H(z)=\sum_{k=0}^q b_k e^{zy_k}$. \ (Here $\{ y_0<y_1< \cdots < y_q\} $ are  linear combinations of $\ln(x_k)$ with rational coefficients; in particular, all $y_k$'s are real numbers). \  Moreover, using the uniqueness of the representation of exponential polynomials, we get 
$$
\{y_k+y_l, \, 0\le k, l\le q \}= \{\ln(x_0)<\ln(x_1))<\cdots <\cdots<\ln(x_p) \} \subseteq {\mathbb R}_+.
$$
Since   $2y_0= \ln(x_0)= 0$, we obtain  $\{ y_0<y_1< \cdots < y_q\} \subset {\mathbb R}_+.$ 

Finally, $\xi= \sum_{k=0}^q b_k \delta_{e^{y_k}}$ is a charge satisfying $\xi*\xi=\mu$, and such that  $supp(\xi) =\{e^{y_0}, \cdots,e^{y_q} \}\subset [1,+\infty)$.

\end{proof}

\section{Applications to {\bf (SRP)}.}
We begin with the following observation.
\begin{prop}
(i) \ Under the notations above, if $\xi_1$ and $\xi_2$ are  square roots of $\mu$, then $\xi_1 = \pm \xi_2$. \newline
(ii) \ Let $\xi$ be a signed square root of a finitely atomic measure $\mu$. \ Then $\mu$ has a positive square root if and only if the coefficients in $\xi$ have constant sign.
\end{prop}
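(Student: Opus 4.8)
The plan is to push both statements through the Mellin transform, relying on two facts already in hand: the multiplicativity $\mathcal{M}_{\eta*\eta}=\mathcal{M}_{\eta}^{2}$ recorded in Section 3, and the injectivity of the Mellin transform noted after \eqref{Mellindef}. The underlying algebraic input is that the ring of entire functions is an integral domain (if a product of two entire functions vanishes identically, then, since the zeros of a nonzero entire function are isolated, the identity theorem forces one factor to be identically zero).

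For part (i), I would begin from the relations $\xi_1*\xi_1=\mu=\xi_2*\xi_2$ and apply the convolution identity to obtain $\mathcal{M}_{\xi_1}(z)^2=\mathcal{M}_{\mu}(z)=\mathcal{M}_{\xi_2}(z)^2$ for every $z\in\C$. Factoring the difference of squares gives the identity of entire functions $(\mathcal{M}_{\xi_1}-\mathcal{M}_{\xi_2})(\mathcal{M}_{\xi_1}+\mathcal{M}_{\xi_2})\equiv 0$. Because the ring of entire functions has no zero divisors, one of the two factors vanishes identically, so either $\mathcal{M}_{\xi_1}=\mathcal{M}_{\xi_2}$ or $\mathcal{M}_{\xi_1}=-\mathcal{M}_{\xi_2}$ throughout $\C$. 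Injectivity of the Mellin transform then yields $\xi_1=\xi_2$ or $\xi_1=-\xi_2$, i.e. $\xi_1=\pm\xi_2$.

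For part (ii), the reverse implication is immediate: if all coefficients of $\xi$ share one sign, then either $\xi\ge 0$, in which case $\xi$ is itself a positive square root, or $-\xi\ge 0$, in which case $(-\xi)*(-\xi)=\xi*\xi=\mu$ exhibits $-\xi$ as a positive square root. For the forward implication, I would take a positive square root $\eta\ge 0$ of $\mu$ and apply part (i) to the pair $\xi,\eta$, obtaining $\xi=\pm\eta$; since $\eta$ has nonnegative coefficients, the coefficients of $\xi$ are all of one sign.

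The only delicate point is that injectivity is being invoked for charges, not merely for positive measures. Here I would record that for a finitely atomic charge $\xi$ the Mellin transform is the exponential polynomial $\sum_k b_k e^{z\ln w_k}$ with distinct real exponents $\ln w_k$, and that such a representation is unique---the same uniqueness of exponential polynomials already used in the proof of Theorem \ref{signedroot}. Consequently $\mathcal{M}_{\xi_1}=\mathcal{M}_{\xi_2}$ forces the atoms $w_k$ and the weights $b_k$ to coincide, so $\xi_1=\xi_2$ as charges, and the sign-reversed case is identical. I expect this uniqueness bookkeeping to be the main thing requiring care, since everything else reduces to the integral-domain factorization above.
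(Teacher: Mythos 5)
Your proposal is correct and follows essentially the same route as the paper: square the Mellin transforms, deduce $\mathcal{M}_{\xi_1}=\pm\mathcal{M}_{\xi_2}$ from the entire-function identity, and conclude via injectivity of the Mellin transform, with part (ii) reduced to part (i). Your two refinements---justifying the globally constant sign via the integral-domain property of entire functions, and noting that injectivity for \emph{charges} rests on the uniqueness of exponential-polynomial representations rather than the Perron inversion cited for positive measures---are details the paper's terse proof leaves implicit, and they are handled correctly.
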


\begin{proof} \ 1) If  $\xi_1*\xi_1 = \xi_2*\xi_2$, then
$ \mathcal{M}_{\xi_1}(z)^2 = \mathcal{M}_{\xi_2}(z)^2$
for all $z\in\C$. \ Since  $\mathcal{M}_{\xi_1}$ and $ \mathcal{M}_{\xi_2}$ are entire functions, we deduce that  $\mathcal{M}_{\xi_1}(z) = \pm\mathcal{M}_{\xi_2}(z)=\mathcal{M}_{\pm\xi_2}(z)$, and since the Mellin transform is one-to-one, we get  $\xi_1 = \pm \xi_2$.\\
2) From Theorem \ref{signedroot}, $\mu$ always admits a charge $\xi$ as a square root. \ Because of $(1)$,  $\mu$ has, as two square roots, i.e., $\pm\xi$. \ Thus, $\mu$ has a positive square root ($\xi\ge 0$ or $-\xi\ge 0$) if and only if the densities in $\xi$ have constant sign.
\end{proof}

In the sequel, we focus on positive measures $\mu$ such that $\mu*t\mu$ has a positive square root. \ Let us first consider  $\nu$ a signed (i.e., not necessarily positive) square root of $\mu*t\mu$. \ Taking into account the previous proposition, we investigate when the coefficients in $\nu$ have a constant sign. \ Since $\textrm{supp}(\mu)\subset[1,+\infty)$, we get $\textrm{supp}(\mu*t\mu)\subseteq [1,+\infty)$ and then $\textrm{supp}(\nu)\subset[1,+\infty)$.
\ Using the identity
$$
\mathcal{M}_{\nu}^2(z)=\mathcal{M}_{\mu}(z)\mathcal{M}_{\mu}(z+1)=H(z)^2H(z+1)^2 \Rightarrow \mathcal{M}_{\nu}(z)=\pm H(z)H(z+1),
$$
we get
	\begin{equation*}\begin{array}{ll}
	   \mathcal{M}_{\nu}(z)  & =\pm \left(\sum_{k=0}^q b_k e^{zy_k}\right)\left(\sum_{k=0}^q b_ke^{y_k} e^{zy_k}\right) \\
	     & =\pm \sum_{k,l=0}^q b_kb_le^{y_k} e^{z(y_k+y_l)}\\
      & =\sum_k \left(\sum_{(i,j)\in \Gamma(\gamma_k)} b_ib_je^{y_i}\right) e^{z\gamma_k},
	\end{array}
	\end{equation*}
 where $\Gamma(\gamma_k)=\left\{ (i,j)\in(\mathbb{Z}_+)^2\ :\ 0\leq i,j\leq q\ \mbox{and}\ y_i+y_j=ln(\gamma_k)\right\}$.\\
 Now, writing $ \lambda_i = e^{y_i}$, we get \begin{equation}\label{ur}\nu=\sum_k\left(\sum_{(i,j)\in \Gamma(\gamma_k)}b_ib_j\lambda_i\right)\delta_{\gamma_k}.\end{equation}

We will now use the following useful observation.
\begin{rem} In Equation \ref{ur}, the atom  $\gamma_k:=\lambda_i\lambda_j$ is said to be uniquely represented (in symbols, $\gamma_k \in \mathcal{UR}$) if $\card\Gamma(\gamma_k)\le 2$. \ In this case, when $\gamma_k$ is nonnegative, we readily get that $b_i$ and $b_j$ are of the same sign. \ The use of uniquely represented elements in $supp(\mu)$ will be helpful in the sequel.
\end{rem} 
Our strategy now is to consider a charge $\nu$, such that both $\mu=\nu*\nu$ and $\nu*t\nu$ are positive. \ It will follow in particular that $W_\mu$  and $\widetilde{ W_\mu}$ must be subnormal. \ We will then show that if $\nu$ has at most six atoms, it is necessarily positive, and that if $\nu$ has more than six atoms, then it is not necessarily positive. \ This will provide an affirmative answer to Conjecture A for $p\le 6$ and a negative answer for $p\ge 7$.

We begin with the next two auxiliary lemmas.
\begin{lem}\label{card3}
    Let $\nu=\sum_{k=0}^q a_k\delta_{\lambda_k}$ be a charge such that $\nu*\nu=\sum_{k=0}^p b_k \delta_{\gamma_k}$ and $\nu*t\nu$ are positive measures. \ If  $\card\Gamma(\lambda^2_k)\leq 3$ for some  $k$, then  $b_k\ne0$.
\end{lem}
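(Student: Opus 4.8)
The plan is to show that the weight carried by the atom $\lambda_k^2$ in $\nu*\nu$ cannot be annihilated by cancellation, by playing off the positivity of $\nu*\nu$ against that of $\nu*t\nu$. First I would analyze the combinatorics of $\Gamma(\lambda_k^2)=\{(i,j):\lambda_i\lambda_j=\lambda_k^2\}$. The diagonal pair $(k,k)$ always belongs to it, and the flip $(i,j)\mapsto(j,i)$ is an involution on $\Gamma(\lambda_k^2)$ whose only fixed point is $(k,k)$ (a fixed point has $\lambda_i^2=\lambda_k^2$, forcing $i=k$ since the atoms are distinct). Hence $\card\Gamma(\lambda_k^2)$ is necessarily odd, and the hypothesis $\card\Gamma(\lambda_k^2)\le 3$ reduces to the two cases $\card\Gamma(\lambda_k^2)=1$ and $\card\Gamma(\lambda_k^2)=3$.

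If $\card\Gamma(\lambda_k^2)=1$ then $\Gamma(\lambda_k^2)=\{(k,k)\}$ and $b_k=a_k^2\ne 0$, since $\lambda_k$ is a genuine atom of $\nu$; this case is immediate. The interesting case is $\card\Gamma(\lambda_k^2)=3$, where $\Gamma(\lambda_k^2)=\{(k,k),(i,j),(j,i)\}$ with $i\ne j$ and $\lambda_i\lambda_j=\lambda_k^2$, so that $b_k=a_k^2+2a_ia_j$. Here I would also read off the weight of $\lambda_k^2$ in $\nu*t\nu$: writing $t\nu=\sum_\ell a_\ell\lambda_\ell\,\delta_{\lambda_\ell}$, this weight equals $c_k:=a_k^2\lambda_k+a_ia_j(\lambda_i+\lambda_j)$. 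Assuming $b_k=0$ for contradiction gives $a_ia_j=-\tfrac12 a_k^2$, and substituting produces $c_k=a_k^2\big(\lambda_k-\tfrac12(\lambda_i+\lambda_j)\big)$.

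The crux, and essentially the only nontrivial step, is to recognize that this last quantity is strictly negative. This is precisely where the AM--GM inequality enters: the constraint $\lambda_i\lambda_j=\lambda_k^2$ exhibits $\lambda_k=\sqrt{\lambda_i\lambda_j}$ as the geometric mean of $\lambda_i$ and $\lambda_j$, and since $i\ne j$ forces $\lambda_i\ne\lambda_j$, the inequality is strict: $\lambda_k<\tfrac12(\lambda_i+\lambda_j)$. As $a_k^2>0$, we obtain $c_k<0$, contradicting the assumed positivity of $\nu*t\nu$. Therefore $b_k\ne 0$ in this case as well. I expect the real work to be conceptual rather than computational: the two genuinely useful ingredients are the parity reduction, which rules out the troublesome even cardinalities and isolates a single off-diagonal orbit, and the observation that the weight of $\lambda_k^2$ in $\nu*t\nu$ converts the problem into a comparison of the geometric and arithmetic means of $\lambda_i$ and $\lambda_j$.
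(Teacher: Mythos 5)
Your proof is correct and follows essentially the same route as the paper's: in the case $\Gamma(\lambda_k^2)=\{(k,k),(i,j),(j,i)\}$, assume $b_k=a_k^2+2a_ia_j=0$ and derive a strictly negative coefficient for $\delta_{\lambda_k^2}$ in $\nu*t\nu$ via the strict AM--GM inequality $\lambda_k=\sqrt{\lambda_i\lambda_j}<\tfrac12(\lambda_i+\lambda_j)$, contradicting positivity. Your explicit parity argument (the flip involution fixing only $(k,k)$, so $\card\Gamma(\lambda_k^2)$ is odd) is a small tidy addition the paper leaves implicit, but the substance is identical.
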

\begin{proof}
 The case $\card\Gamma(\lambda^2_k)=1$ is trivial since it corresponds to a uniquely represented atom. \  Suppose  $\Gamma(\lambda^2_k)=\{(k,k),(i,j),(j,i) \}$, we get $ \lambda_k^2=\lambda_i \lambda_j$ and if the coefficient $b_k=a_k^2+2a_ia_j=0,$  we will get for the coefficient of  $\lambda^2_k$ in $\nu*t\nu$
 $$
 \lambda_ka_k^2+(\lambda_i+\lambda_j)a_ia_j=\sqrt{\lambda_i\lambda_j}a_k^2+(\lambda_i+\lambda_j)a_ia_j<(\lambda_i+\lambda_j)\left(\frac{a_k^2}{2}+a_ia_j\right)=0.$$
 Contradiction.
\end{proof}
\begin{lem}\label{p4-5}
    Let $\nu=\sum_{k=0}^q a_k\delta_{\lambda_k}$ be a charge such that $\nu*\nu=\sum_{k=0}^p b_k \delta_{\gamma_k}$ and $\nu*t\nu$ are positive measures. \ Then \newline
    (i) \ If $q \ge 4$, then  $p\ge 6$. \newline
    (ii) \ If $q \ge 5$, then  $p\ge 7$.
\end{lem}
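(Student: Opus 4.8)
The plan is to pass to logarithmic coordinates and turn the whole statement into a question about sums. Writing $y_k:=\ln\lambda_k$, so that $y_0<y_1<\cdots<y_q$, the atoms of $\nu*\nu$ sit at the points $e^{s}$ with $s$ of the form $y_i+y_j$ $(0\le i,j\le q)$, and the atom at $e^{s}$ is present exactly when its coefficient $c(s):=\sum_{\{i,j\}:\,y_i+y_j=s} w_{ij}$ is nonzero, where $w_{ij}=2a_ia_j$ for $i<j$ and $w_{ii}=a_i^2$. The number of distinct sums $y_i+y_j$ is at least $2q+1$ by the elementary sumset bound $|A+A|\ge 2|A|-1$, but cancellations $c(s)=0$ can reduce the count; the task is to bound them using both hypotheses. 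I record these in coefficient form: $c(s)\ge 0$ (positivity of $\nu*\nu$) and $\sum_{\{i,j\}:\,y_i+y_j=s} w_{ij}\,\frac{\lambda_i+\lambda_j}{2}\ge 0$ (positivity of $\nu*t\nu$). The mechanism behind Lemma \ref{card3} is that, among all pairs with a fixed product $e^{s}$, the most balanced pair minimizes $\frac{\lambda_i+\lambda_j}{2}$ (AM--GM), so a positive weight carried by the balanced pair cannot be cancelled by negative weights on the more spread-out pairs without violating the second inequality.

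First I would isolate the atoms that can never cancel. The two smallest sums $2y_0,\ y_0+y_1$ and the two largest $y_{q-1}+y_q,\ 2y_q$ are uniquely represented, so their coefficients $a_0^2,\ 2a_0a_1,\ 2a_{q-1}a_q,\ a_q^2$ are nonzero; positivity moreover forces $a_0a_1>0$ and $a_{q-1}a_q>0$, which I will use as sign anchors. The third smallest sum is $\min(y_0+y_2,\,2y_1)$: if the minimum is strict it is uniquely represented, and if $y_0+y_2=2y_1$ it is a diagonal sum with $\card\Gamma=3$, so Lemma \ref{card3} keeps it alive; the symmetric argument applies at the top. Hence the three smallest and three largest distinct products always survive, and since there are at least $2q+1\ge 9$ distinct products for $q\ge 4$, these six values are distinct. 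This already gives $p+1\ge 6$.

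The heart of the matter, and the step I expect to be the main obstacle, is to produce the remaining survivor(s): one more for (i), two more for (ii). Local uniqueness and the AM--GM protection are not enough, because a middle product such as $\lambda_2^2$ can cancel: that requires at least two distinct additive coincidences $y_i+y_j=2y_2$, and the resulting sign pattern is consistent with each positivity condition taken alone. The plan is therefore to assume, for contradiction, that $\supp(\nu*\nu)$ is as small as allowed ($p+1=6$ for (i), $p+1=7$ for (ii)), so that every product other than the protected ones cancels, and then to use all the cancellation equations at once. Each cancellation is an additive quadruple $y_i+y_j=y_k+y_l$ with $a_ia_j=-a_ka_l$, and the $\nu*t\nu$ inequality pins the sign of the more spread-out pair. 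Propagating these sign constraints inward from the two ends, starting from $a_0a_1>0$ and $a_{q-1}a_q>0$, should over-determine the signs of the $a_k$ and collide with at least one cancellation equation. Turning this into a finite verification requires enumerating how the coincidences among the $y_k$ can overlap, and this is where the casework concentrates.

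For part (ii) I would run the same scheme with $q=5$: the three smallest and three largest products survive as above, and two further middle products must be shown to persist. I expect the identical sign-propagation argument to apply, now with more atoms available to anchor it; one may also attempt to bootstrap from part (i) after deleting an extreme atom of $\nu$, though care is needed since deletion need not preserve positivity. The only genuinely new ingredient is bookkeeping, namely checking that the extra freedom created by the two additional pairs in $\nu$ cannot be absorbed entirely into cancellations without forcing a negative coefficient in $\nu*t\nu$.
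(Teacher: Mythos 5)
Part (i) of your proposal is correct and is essentially the paper's own argument: the two extreme products at each end of $\supp(\nu*\nu)$ are uniquely represented, the third product from each end either is uniquely represented or satisfies $\card\Gamma\le 3$ so that Lemma \ref{card3} protects it, and the sumset bound guarantees the six survivors are pairwise distinct. (A small bookkeeping point: the paper states the lemma with sums $\sum_{k=0}^q$ but its proof, and the application in the main theorem, treat $q$ and $p$ as the \emph{numbers} of atoms; your count of six surviving atoms is exactly the intended conclusion of (i), so nothing is lost by your reading.)

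The genuine gap is in part (ii): you describe a strategy but do not execute it, and the unexecuted step is precisely the substance of the paper's proof. You correctly anticipate that one must enumerate the possible additive coincidences among the $y_k$ and that the $\nu*t\nu$ positivity pins the sign of the more spread-out pair in any cancelling quadruple; but the enumeration is not routine bookkeeping, and "propagating these sign constraints \dots should over-determine the signs" is an expectation, not an argument. Concretely, the paper shows that unless a seventh atom appears immediately (as a uniquely represented product, or via Lemma \ref{card3} when $\card\Gamma\le 3$), the coincidences are forced one by one: $\lambda_2^2=\lambda_1\lambda_3$, then $\lambda_2\lambda_3=\lambda_1\lambda_4$, then $\lambda_3^2=\lambda_1\lambda_k=\lambda_2\lambda_l$, and multiplicative manipulations (multiplying the relations by $\lambda_3$) force $l=4$ and exclude $k>5$, so that the only residual configuration is the geometric one, $\lambda_k=\lambda_1 r^{k-1}$ for $k\le 5$. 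Only in that terminal configuration does your sign-propagation endgame apply: assuming both middle coefficients vanish, $a_2a_3+a_1a_4=0$ and $a_3^2+2a_1a_5+2a_2a_4=0$, the inequality $(r+r^2)a_2a_3+(1+r^3)a_1a_4\ge 0$ (your spread-out-pair mechanism, quantitatively $r+r^2<1+r^3$) forces $a_2a_3<0$ and $a_1a_4>0$, hence $a_2a_4>0$ and then $a_1a_5<0$, after which a contradiction follows (the paper uses the weighted inequality $(1-r^2)^2>r(1-r)^2$ at the atom $\lambda_3^2$; alternatively $a_1a_4>0$ and the uniquely represented top pair $a_4a_5>0$ already contradict $a_1a_5<0$). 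So your mechanisms are adequate for the final case, but without the reduction showing that the geometric configuration is the \emph{only} case left — the chain of forced coincidences above — part (ii) remains unproven. Your fallback of bootstrapping from (i) by deleting an extreme atom of $\nu$ fails for exactly the reason you note: positivity of $\nu*\nu$ and $\nu*t\nu$ is not inherited by the truncated charge.
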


\begin{proof}
(i) \ Suppose  $q\ge 4$. \ Since  $\card\Gamma(\lambda^2_2)\le 3$ and $\card\Gamma(\lambda_{q-1}^2)\le 3$, we obtain 
$\{\lambda_1^2,\lambda_1\lambda_2,\lambda_2^2, \lambda_{q-1}^2,\lambda_{q-1}\lambda_q,\lambda_q^2\} \subset supp(\mu) $ and hence $p\ge 6$.

(ii) \ From the previous item, $p\ge 6$. \ To show that  $p\ge 7$, it suffices to exhibit a new atom.\\
First, if  $\lambda_2^2$ is $\mathcal{UR}$ or if $\lambda_2^2=\lambda_1\lambda_k$ with some $k>4$, $\lambda_1\lambda_3$  becomes an  $\mathcal{UR}$ and hence provides an additional atom in $\nu*\nu$. \ We write then
  $\lambda_2^2=\lambda_1\lambda_3$ and we show that either $\lambda_3^2$ or $\lambda_2\lambda_3$ is the additional atom or produce a new one. \ To this goal, we suppose that neither $\lambda_3^2$ nor  $\lambda_2\lambda_3$ is $\mathcal{UR}$. \ In this case, necessarily $\lambda_2\lambda_3=\lambda_1\lambda_4$ (otherwise   $\lambda_1\lambda_4$ will be the new atom as an $\mathcal{UR}$.)
We write $\lambda_3^2=\lambda_1\lambda_k$, $\lambda_3^2=\lambda_2\lambda_l$ or  $\lambda_3^2=\lambda_1\lambda_k=\lambda_2\lambda_l$ with zero as  corresponding coefficient. \ Since the two first situations will provide a new atom because of Lemma \ref{card3}, we can assume  that $\lambda_3^2=\lambda_1\lambda_k=\lambda_2\lambda_l$. \ Now, multiplying $\lambda_2\lambda_3=\lambda_1\lambda_4$   with $\lambda_3$ gives $l=4$. 

Now, from the identity $\lambda_1\lambda_k=\lambda_2\lambda_l$, we derive that $k\ge 5$.\\
$1)$ In the case where $k>5$ and  $\lambda_1\lambda_5=\lambda_3\lambda_4$, then by multiplying with $\lambda_3$, we get  
$ \lambda_1\lambda_3\lambda_5=\lambda_3^2\lambda_4=\lambda_1\lambda_k\lambda_4$. \ It follows that  $ \lambda_3\lambda_5=\lambda_k\lambda_4$ for some  $k> 5$, which is impossible. \ Then if $k\ne 5$ $\lambda_1\lambda_5$ will give additional atom as an $\mathcal{UR}$ element. \\
$2)$ $k=5$. \ That is $\lambda_1\lambda_4=\lambda_2\lambda_3$, and
$\lambda_3^2=\lambda_1\lambda_5=\lambda_2\lambda_4$. \ For $r=\frac{\lambda_2}{\lambda_1}$, we get $\lambda_k=\lambda_1r^{k-1}$ for every $k\le 5$. \ Now, to provide the $7^{th}$ atom, it suffices to show that either    $ a_2a_3+a_1a_4\ne 0$ or $a_3^2+2a_1a_5+2a_2a_4\ne 0$. \ Seeking a contradiction, suppose that  $ a_2a_3+a_1a_4=a_3^2+2a_1a_5+2a_2a_4= 0$. \ From the inequality $ (r+r^2)a_2a_3+(1+r^3) a_1a_4\ge 0$, we derive that $a_2a_3<0$. \ Otherwise,
$$ 0\le (r+r^2)a_2a_3+(1+r^3) a_1a_4< (1+r^3)(a_2a_3+ a_1a_4 )= 0$$
It follows also that $a_1a_4>0 $ and then   $a_2a_4>0.$
Now, from 
$a_3^2+2a_1a_5+2a_2a_4=0$, we derive that $a_1a_5<a_1a_5+a_2a_4=-a_3^2<0$ and since 
$(1-r^2)^2>r(1-r)^2$ we obtain the next contradiction
$$\begin{array}{lll} 0 & \le & r^2a_3^2+(1+r^4)a_1a_5+(r+r^3)a_2a_4 \\
&=&-r^2(2a_1a_5+2a_2a_4) +(1+r^4)a_1a_5+(r+r^3)a_2a_4 \\
& = &    (1-r^2)^2a_1a_5+r(1-r)^2a_2a_4 \\
&\le & r(1-r)^2(a_1a_5+a_2a_4) < 0.
\end{array}$$
The proof is complete.
\end{proof}
We now state and prove our main result.
\begin{thm}Let  $q, p$ be integers and $\nu=\sum_{k=0}^q b_k\delta_{\lambda_k}$ be a charge such that $\mu=\nu*\nu$ is a positive $p-$atomic measure with $p\le 6$. \ If $\mu=\nu*t\nu\ge 0$, then the coefficients $ b_k$ have a constant sign. \ In particular, $\mu$ admits a positive square root.
\end{thm}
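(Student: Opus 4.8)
The plan is to argue by contradiction, assuming the nonzero coefficients of $\nu$ do not all share a sign, and to play the two positivity hypotheses $\nu*\nu\ge0$ and $\nu*t\nu\ge0$ against the bound $p\le6$. First I would discard the vanishing coefficients, so that $\{\lambda_0<\cdots<\lambda_q\}=\supp\nu$ and every coefficient is nonzero; replacing $\nu$ by $-\nu$ if necessary (which alters neither $\nu*\nu$ nor $\nu*t\nu$), I may assume $b_0>0$. By Lemma~\ref{p4-5}(ii), $p\le6$ forces $q\le4$, leaving only finitely many sign patterns. Since the two extreme products $\lambda_0\lambda_1$ and $\lambda_{q-1}\lambda_q$ are uniquely represented, their masses $2b_0b_1$ and $2b_{q-1}b_q$ in $\nu*\nu$ are $\ge0$, so no sign change can occur at the first or last step. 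For $q\le2$ there is no interior step and the claim is immediate; the substance is therefore in $q\in\{3,4\}$, where, with $b_0>0$, the only patterns carrying a sign change are the two-block patterns $(+,+,-,-)$ (for $q=3$), $(+,+,-,-,-)$ and $(+,+,+,-,-)$ (for $q=4$), and the isolated-negative pattern $(+,+,-,+,+)$ (for $q=4$).

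To eliminate a two-block pattern, let the unique sign change sit between indices $k$ and $k+1$, and examine the mass of $\lambda_k\lambda_{k+1}$ in $\nu*\nu$. A product $\lambda_i\lambda_j=\lambda_k\lambda_{k+1}$ with $\{i,j\}\ne\{k,k+1\}$ must be genuinely straddling, i.e. $i<k<k+1<j$, and for $q\le4$ there is at most one such pair (two would force two of the $\lambda$'s to coincide). In a two-block pattern this straddling pair multiplies a coefficient of the positive block by one of the negative block, so its contribution $2b_ib_j$ is negative, as is the diagonal term $2b_kb_{k+1}$. Hence the mass at $\lambda_k\lambda_{k+1}$ is strictly negative, contradicting $\nu*\nu\ge0$.

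The remaining, and genuinely delicate, case is the isolated-negative pattern $(+,+,-,+,+)$, where $b_2<0$ and the two-block argument breaks down because the straddling products now pair two positives. Here the plan is to exhibit seven \emph{distinct} atoms of $\nu*\nu$ directly, contradicting $p\le6$. I would use
$$
\lambda_0^2<\lambda_0\lambda_1<\lambda_1^2<\lambda_2^2<\lambda_3^2<\lambda_3\lambda_4<\lambda_4^2,
$$
which are pairwise distinct by the strict ordering of the $\lambda_k$. The one-sided atoms $\lambda_0^2,\lambda_0\lambda_1,\lambda_3\lambda_4,\lambda_4^2$ are uniquely represented with positive mass, while $\lambda_2^2$ collects only $b_2^2$ and possibly $2b_1b_3,2b_0b_4$, all $\ge0$, so its mass is $\ge b_2^2>0$. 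The only atoms that could a priori vanish are $\lambda_1^2$ and $\lambda_3^2$, which happens exactly when $\lambda_1^2=\lambda_0\lambda_2$ (resp. $\lambda_3^2=\lambda_2\lambda_4$), so that the diagonal term competes with a negative cross term. Since one checks $\card\Gamma(\lambda_1^2)\le3$ and $\card\Gamma(\lambda_3^2)\le3$, this is precisely the situation covered by Lemma~\ref{card3}, whose proof uses $\nu*t\nu\ge0$ through an arithmetic--geometric mean inequality; it guarantees these two masses are nonzero, hence positive. Thus all seven atoms survive and $p\ge7$, a contradiction. I expect this pattern to be the main obstacle, and the unique place where $\nu*t\nu\ge0$ is indispensable: without it one finds honest $5$-atom examples such as $\nu=\delta_1+\sqrt2\,\delta_r-\delta_{r^2}+\sqrt2\,\delta_{r^3}+\delta_{r^4}$ $(r>1)$, for which $\nu*\nu\ge0$ yet the mass of $\lambda_1^2$ in $\nu*t\nu$ equals $-(r-1)^2<0$.

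Having excluded every sign change, the nonzero $b_k$ all share the sign of $b_0$, so the coefficients have constant sign (reinstating the discarded zeros does not affect this). Finally, by the Proposition opening this section --- that a finitely atomic $\mu$ has a positive square root exactly when its signed square root has constant-sign coefficients --- either $\nu\ge0$ or $-\nu\ge0$, and hence $\mu=\nu*\nu$ admits a positive square root.
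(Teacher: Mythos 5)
Your proposal is correct, and it reaches the conclusion by the same toolkit as the paper (uniquely represented atoms, Lemma~\ref{card3}, Lemma~\ref{p4-5}) but along a genuinely different axis of case analysis. The paper splits on $p=\card \supp \mu$ ($p=2,3,4$, then $p\in\{5,6\}$ with three or four atoms in $\nu$, degenerating to geometric support when the relevant products fail to be uniquely represented), whereas you split on the sign pattern of $\nu$; your two-block elimination --- the mass of $\nu*\nu$ at the sign-change product $\lambda_k\lambda_{k+1}$ is strictly negative, since any coinciding product $\lambda_i\lambda_j$ must straddle the change and at most one such pair can exist --- unifies in one stroke what the paper does piecemeal (its geometric four-atom case, via the coefficient $2(b_1b_4+b_2b_3)\ge 0$, is exactly your straddling argument at $k=2$). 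A second difference stems from indexing: you read $\nu=\sum_{k=0}^q b_k\delta_{\lambda_k}$ as having $q+1$ atoms, so Lemma~\ref{p4-5}(ii), taken literally, only caps $\nu$ at five atoms and forces you to dispose of the pattern $(+,+,-,+,+)$ by hand; the paper's proof tacitly treats $q$ as the number of atoms and therefore never meets this case in the theorem, delegating all five-atom charges to Lemma~\ref{p4-5}(ii). Your cautious reading costs you that extra case, but your treatment of it is correct and notably shorter than the computation inside that lemma: the seven pairwise distinct atoms $\lambda_0^2<\lambda_0\lambda_1<\lambda_1^2<\lambda_2^2<\lambda_3^2<\lambda_3\lambda_4<\lambda_4^2$ all carry nonzero mass, with only $\lambda_1^2$ and $\lambda_3^2$ (in the coincidences $\lambda_1^2=\lambda_0\lambda_2$, $\lambda_3^2=\lambda_2\lambda_4$, where indeed $\card\Gamma\le 3$) needing Lemma~\ref{card3} --- which is also the only point where $\nu*t\nu\ge 0$ enters your argument, just as in the paper it enters only through Lemma~\ref{p4-5}. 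Finally, your closing example checks out: for $\nu=\delta_1+\sqrt{2}\,\delta_r-\delta_{r^2}+\sqrt{2}\,\delta_{r^3}+\delta_{r^4}$ one computes $\nu*\nu=\delta_1+2\sqrt{2}\,\delta_r+7\delta_{r^4}+2\sqrt{2}\,\delta_{r^7}+\delta_{r^8}\ge 0$ with $p=5$, while the mass of $r^2$ in $\nu*t\nu$ equals $-(r-1)^2<0$; this shows the hypothesis $\nu*t\nu\ge 0$ is already indispensable at $p=5$, a point the paper does not record, and is a worthwhile addition.
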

 The proof below provides a new and simple way to recover a solution to the square root problem in the case $p\le 6$. \\
{\it Proof.} 
\begin{itemize}
    \item \ $p= 2$. \ This is a trivial case, since a $2$--atomic measure $\mu$ has a square root if and only if $\mu= a\delta_0+b\delta_\lambda$, with $a, b, \lambda>0$. \ As a result, there is no $2$--atomic positive measure supported in $[1, +\infty)$ such that $\widetilde{ W_\mu}$ is subnormal.
    \item \ $p= 3$. \ In this case $q=2$, $\nu= b_1 \delta_{\lambda_1}+b_2 \delta_{\lambda_2}$ and $\mu =\nu*\nu =  b_1^2 \delta_{\lambda_1^2}+b_1b_2 \delta_{\lambda_1\lambda_2}+ b_2^2 \delta_{\lambda_2^2}$ with $b_1, b_2$ real numbers. \ Since $b_1b_2$ is uniquely represented, it follows that $b_1$ and $b_2$ have the same sign. \ 
    \item \ $p= 4$. \ A $4$--atomic measure has no square root. \ Indeed, assume $\nu$ exists. \ Then, necessarily   $q\ge 3$. \ Now write 
    $$
    \nu= b_1 \delta_{\lambda_1}+b_2 \delta_{\lambda_1}+ \cdots +  b_{q-1} \delta_{\lambda_{q-1}} +  b_q \delta_{\lambda_q},
    $$
    and therefore, 
    $$
    \nu*\nu = b_1^2 \delta_{\lambda_1^2}+2b_1b_2 \delta_{\lambda_1\lambda_2}+b_2^2\delta_{\lambda_2^2}+\cdots+ b_{q-1}^2 \delta_{\lambda_{q-1}^2}+2b_{q-1}b_{q} \delta_{\lambda_{q-1}\lambda_{q}}+ b_{q}^2 \delta_{\lambda_{q}^2}.
    $$
    It is then clear that $p$ should be at least $5$, a contradiction.
    
    \item \ $p\in \{5, 6\}$. \ From Lemma \ref{p4-5}, we obtain  $q < 5$. \ Thus either  $q =3$ or $q=4$.

-  $q= 3$. \ We put $\nu= b_1 \delta_{\lambda_1}+b_2 \delta_{\lambda_2}+b_3 \delta_{\lambda_3}$ and $\mu=\nu*\nu = b_1^2 \delta_{\lambda_1^2}+2b_1b_2 \delta_{\lambda_1\lambda_2}+b_2^2 \delta_{\lambda_2^2}+2b_1b_3 \delta_{\lambda_1\lambda_3}+2b_2b_3 \delta_{\lambda_2\lambda_3}+ b_3^2 \delta_{\lambda_1^3}\ge 0,$ with $b_1, b_2$ and $b_3$ real numbers. \ Since $ \lambda_1\lambda_2$ and $ \lambda_2\lambda_3$ are uniquely represented, it follow that $b_1b_2>0$ and $b_2b_3>0$. \ This gives as above $b_1, b_2$ and $b_3$ have the same sign.

- For $q = 4$, we write $\nu = b_1\delta_{\lambda_1}+\cdots+b_4\delta_{\lambda_4}$ and 
$$
\begin{array}{ccl}
    \mu = \nu *\nu  & =& b_1^2\delta_{\lambda_1^2}+ b_2^2\delta_{\lambda_2^2}+b_3^2\delta_{\lambda_3^2}+b_4^2\delta_{\lambda_4^2}
+2(b_1b_2\delta_{\lambda_1\lambda_2}+b_1b_3\delta_{\lambda_1\lambda_3} \\
     & & +
b_1b_4\delta_{\lambda_1\lambda_4}+b_2b_3\delta_{\lambda_2\lambda_3} +b_2b_4\delta_{\lambda_2\lambda_4}+
b_3b_4\delta_{\lambda_3\lambda_4})\ge 0.
\end{array}
$$ 
As before, $b_1b_2\ge 0$ and $b_3b_4\ge 0$. \ If, moreover $ \lambda_1\lambda_3\in \mathcal{UR}$ or $ \lambda_2\lambda_4\in \mathcal{UR}$, we get $b_1b_3\ge 0$ or $b_2b_4\ge 0$ and then $b_1,b_2,b_3$ and $b_4$  have a constant sign.\\
If not, $ \lambda_2^2=\lambda_1\lambda_3$,  and  $ \lambda_3^2=\lambda_2\lambda_4$, we get $\frac{\lambda_2}{\lambda_1}= \frac{\lambda_3}{\lambda_2}=\frac{\lambda_4}{\lambda_3} (=r)$  which corresponds to the case when the support is contained in a geometric sequence $\lambda_k=r^k$ with $a=\lambda_1>1$. \  Then 

$
\begin{array}{ll}
    \mu & = b_1^2\delta_{a^2r^2}+2b_1b_2\delta_{a^2r^3}+(2b_1b_3+b_2^2)\delta_{a^2r^4}+
2(b_1b_4+b_2b_3)\delta_{a^2r^5} \\
     & + (2b_2b_4 + b_3^2)\delta_{a^2r^6}+2b_3b_4\delta_{a^2r^7}+b_4^2\delta_{a^2r^8}.
\end{array}
$\\

It follows that $ b_1b_4+b_2b_3\ge 0$ and thus $b_1,b_2,b_3$ and $b_4$ have constant sign. \ 
\end{itemize}


\section{Conjecture A Settled in the Negative}

From the previous section if $\nu$ and $-\nu$  are both $q$ atomic non-positive charges, then $\mu=\nu*\nu $ has no positive square root. \ Also, $\nu*t\nu$  is a square root of $\mu*t\mu$. \ Hence, if   $\nu*t\nu$ is positive, we will have $M_\mu$ and $M_{\tilde{\mu}}$ are subnormal. \ This will provide a counter-example to Conjecture A. \ Since for $p\le 6$, the conjecture is valid, we take $p\ge 7$  (and thence, $q\ge 5$), \ where  $p=\card \supp(\mu)$ and $q=\card \supp(\nu)$.\\

Let $  \lambda \in (1, +\infty)$  and  consider the $5$--atomic charge given by 
$$
\nu = b_1\delta_{\lambda}+b_2\delta_{\lambda^2}+b_3\delta_{\lambda^3}+b_4\delta_{\lambda^4}+b_5\delta_{\lambda^5}.
$$
Assume  that $ \nu*\nu$ and $\nu*t\nu $ are both positive. \ We will have
\begin{align*} \nu*\nu = & \, b_1^2\delta_{\lambda^2}+ 2b_1b_2\delta_{\lambda^3} +(2b_1b_3+b_2^2)\delta_{\lambda^4} +2(b_1b_4+b_2b_3)\delta_{\lambda^5}\\ & +(b_3^2+2(b_1b_5+b_2b_4))\delta_{\lambda^6}    
 +2(b_2b_5+b_3b_4)\delta_{\lambda^7}+ (2b_3b_5+b_4^2)\delta_{\lambda^8}\\ & + 2b_4b_5\delta_{\lambda^9} +b_{5}^2\delta_{\lambda^{10}}, \\
    \nu*t\nu = & \; \lambda b_1^2\delta_{\lambda^2}+ (\lambda+\lambda^2)b_1b_2\delta_{\lambda^3} +(\lambda^2 b_2^2+(\lambda+\lambda^3)b_1b_3)\delta_{\lambda^4}+((\lambda+\lambda^4)b_1b_4\\
    & + (\lambda^2+\lambda^3)b_2b_3)\delta_{\lambda^5} +(\lambda b_3^2+((\lambda+\lambda^2)b_1b_5+(\lambda^2+\lambda^4)b_2b_4))\delta_{\lambda^6}\\ & +((\lambda^2+\lambda^5)b_2b_5 
    +(\lambda^3+\lambda^4)b_3b_4)\delta_{\lambda^7} +((\lambda^3+\lambda^5)b_3b_5+\lambda^4 b_4^2)\delta_{\lambda^8} \\ & + (\lambda^4+\lambda^5)b_4b_5\delta_{\lambda^9} +\lambda b_{5}^2\delta_{\lambda^{10}}.
\end{align*} 

As before $b_1b_2>0$ and $b_4b_5>0$. \ Since $b_1b_4+b_2b_3\ge0 $ and $b_2b_5+b_3b_4\ge 0$ we drive that $b_1,b_2,b_4$ and $b_5$ have constant sign. \ Otherwise $b_2b_5<0$ and $b_1b_4<0$ and both signs of $b_3$ will give a contradiction with $b_1b_4+b_2b_3\ge0 $ and $b_2b_5+b_3b_4\ge 0$.

Without loss of generality, we can assume $b_1,b_2,b_4$ and $b_5$ are nonnegative. \ Denote $p\in\{7,8,9\}$  for the number of atoms in $\nu*\nu$. \ If $b_3>0$, then  $p=9$ and in the case where $b_3<0$    the possible zero coefficients are $$   b_2b_5+b_3b_4,\; \mbox{ and } \;  b_1b_4+b_2b_3.$$ 
 
Clearly  $p=7 \iff b_2b_5+b_3b_4=b_1b_4+b_2b_3=0$,  $p=8 \iff \mbox{ either } b_1b_4+b_2b_3=0 \mbox{ or } b_2b_5+b_3b_4=0 $ and $p=9 $ otherwise. \\
Let us now study those instances when $\mu*t\mu$ is positive. \ Since $ \lambda > 1$, we have  $(\lambda^2+\lambda^5)>(\lambda+\lambda^4) > (\lambda^2+\lambda^3)$, and we drive   that   $$(\lambda^2+\lambda^5)b_2b_5+(\lambda^3+\lambda^4)b_3b_4> (\lambda^3+\lambda^4)(b_2b_5+b_3b_4)\ge 0.$$ and  $$(\lambda+\lambda^4)b_1b_4+(\lambda^2+\lambda^3)b_2b_3>(\lambda^2+\lambda^3)(b_1b_4+b_2b_3)\ge 0.$$ 
 Thus, $\mu*t\mu\ge 0$ if and only if 
$$\lambda b_2^2+(1+\lambda^2)b_1b_3 \ge 0 \mbox{ and }  (1+\lambda^2)b_3b_5+\lambda b_4^2\ge 0, $$
equivalently 
\begin{equation}\label{xx}
    \frac{\lambda}{1+\lambda^2}\ge max(\frac{-b_1b_3}{b_2^2},\frac{-b_3b_5}{b_4^2})
\end{equation} 

\begin{ex} \label{ex51} There exists a $9$--atomic positive measure $\mu$ supported in $\mathbb R_+$ such that: $\mu*t\mu$ has a  positive square root, while $\mu$ has no positive square root.  
\end{ex}
{\it Proof.} Let $x$ be a positive real number and $\lambda > 1$. \ 
Consider the $5-$atomic charge $\xi_x$ given by 
$$\xi_x := \delta_{\lambda}+\delta_{\lambda^{2}}-x\delta_{\lambda^{3}}+\delta_{\lambda^{4}}+\delta_{\lambda^{5}}.$$
The coefficients of $\xi_x$ are: $(b_1,b_2,b_3,b_4,b_5)=(1,1,-x,1,1)$.
The finite atomic measure $\mu_x=\xi_x*\xi_x$ given by
$$\begin{array}{ll}
   \mu_x =   & \delta_{\lambda^{2}} + 2\delta_{\lambda^{3}} + (1-2x)\delta_{\lambda^{4}} + (2-2x)\delta_{\lambda^{5}} + (4+x^2)\delta_{\lambda^{6}}  + (2-2x)\delta_{\lambda^{7}}\\
     &  + (1-2x)\delta_{\lambda^{8}} + 2\delta_{\lambda^{9}}+\delta_{\lambda^{10}}, 
\end{array}
$$
has no positive  square root.

It is also clearly a positive $9-$atomic measure if and only if $0<x < \frac{1}{2}$. \ 
On the other hand, $\mu_x*t\mu_x$ possesses  as square root $\nu_x=\xi_x*t\xi_x$ that is positive (because of (\ref{xx})) for any $\lambda$ satisfying 
$$
x\le  \frac{\lambda}{1+\lambda^2}.
$$ 
One can take, for instance, $\lambda = 2$ and $x=\frac{1}{5}$. \ The measure
$$   \mu= \delta_{4} + 2\delta_{8} + \frac{3}{5}\delta_{16} + \frac{8}{5}\delta_{32} + \frac{101}{25}\delta_{64} + \frac{8}{5}\delta_{128} + \frac{3}{5}\delta_{256} + 2\delta_{512}+\delta_{1024} $$
has no positive square root satisfying $\mu*t\mu \ge 0.$

\begin{ex} \label{ex52} \ For $p=7,8$, there exists a $p$--atomic positive measure supported in $\mathbb R_+$ such that: $\mu*t\mu$ has a positive square root measure, but $\mu$ has no positive square root.  
\end{ex}
Let $$\xi=\delta_{\lambda} + \alpha\delta_{\lambda^2} - \delta_{\lambda^3} + \alpha \delta_{\lambda^4} + \beta \delta_{\lambda^5}$$
where $\alpha, \beta,$ and $  \lambda\ne 1$  are positive numbers. For  $\mu=\xi*\xi, $
we have 
$$\begin{array}{ll}
  \mu =    &  \delta_{\lambda^2} + 2\alpha \delta_{\lambda^3} + (\alpha^2-2)\delta_{\lambda^4} + (1 + 2\alpha^2+2\beta)\delta_{\lambda^6} 
    + (2\alpha\beta-2\alpha )\delta_{\lambda^7} \\
     & + (\alpha^2 -2\beta )\delta_{\lambda^8} + 2\alpha\beta \delta_{\lambda^9} + \beta^2\delta_{\lambda^{10}}
\end{array}  
$$
The measure $\mu$ is positive if and only if: $$ \alpha^2\geq 2\beta \geq 2.$$

On the other hand, the coefficients of $\xi$ are: $(b_1,b_2,b_3,b_4,b_5)=(1,\alpha,-1,\alpha,\beta)$.
Again, because of (\ref{xx}), $\nu=\xi *t\xi\ge 0$, if and only if
 $$\frac{\lambda}{1+\lambda^2}\ge max(\frac{1}{\alpha^2},\frac{\beta}{\alpha^2})=\frac{\beta}{\alpha^2}. $$

Thus, for $\beta=2$, $\lambda = 2$ and $\alpha=3$, the $8$ atomic measure 
$$ \mu = \delta_{4} + 6 \delta_{8} + 7\delta_{16}  + 23\delta_{64} 
    + 6\delta_{128}  + 5\delta_{256} + 12\delta_{512} +2\delta_{1024}
$$
and for  $\beta=1$, $\lambda = 2$ and $\alpha=2$, the $7$ atomic measure 
$$ \mu = \delta_{4} + 4 \delta_{8} + 2\delta_{16} + 13\delta_{64} 
     + 4 \delta_{512} +\delta_{1024}
$$ is positive, without a positive square root, and such that  $\mu*t\mu$  has a positive square root.

\section{An Additional Example} 
We present a concrete example of a non-subnormal weighted shift $W_\alpha$ such that $\widetilde{W_\alpha}$ is subnormal. \ 

\begin{ex} \label{ex61} \ We now exhibit a weighted shift $W_\mu$, where $\mu$ is a non-positive charge and such that $\mu*t\mu$ admits a positive square root. \ Taking into account the computations in the previous section, it suffices to find a non-positive charge $\nu$ such that $\mu=\nu*\nu$ is a charge and $ \sqrt{\mu*t\mu}=\nu*t\nu$ is positive. 

Let $\lambda \in (1, +\infty)$  and  consider a $5$--atomic charge given by 
$$
\nu = b_1\delta_{\lambda}+b_2\delta_{\lambda^2} - \delta_{\lambda^3}+b_4\delta_{\lambda^4}+b_5\delta_{\lambda^5},
$$
where $b_1,b_2,b_4$ and $b_5$ are positive numbers. \
 We  have
\begin{align*} \nu*\nu = &\;  b_1^2\delta_{\lambda^2}+ 2b_1b_2\delta_{\lambda^3} +(b_2^2-2b_1)\delta_{\lambda^4} +2(b_1b_4-b_2)\delta_{\lambda^5}+(1+2(b_1b_5+b_2b_4))\delta_{\lambda^6} \\    
& +2(b_2b_5-b_4)\delta_{\lambda^7}+ (b_4^2-2b_5)\delta_{\lambda^8}+ 2b_4b_5\delta_{\lambda^9} +b_{5}^2\delta_{\lambda^{10}}, \\
    \nu*t\nu = & \; \lambda b_1^2\delta_{\lambda^2}+ (\lambda+\lambda^2)b_1b_2\delta_{\lambda^3} +(\lambda^2 b_2^2-(\lambda+\lambda^3)b_1)\delta_{\lambda^4}\\
    &+((\lambda+\lambda^4)b_1b_4-(\lambda^2+\lambda^3)b_2)\delta_{\lambda^5} +(\lambda +((\lambda+\lambda^2)b_1b_5+(\lambda^2+\lambda^4)b_2b_4))\delta_{\lambda^6}  \\
    &+((\lambda^2+\lambda^5)b_2b_5-(\lambda^3+\lambda^4)b_4)\delta_{\lambda^7} +(\lambda^4 b_4^2-(\lambda^3+\lambda^5)b_5)\delta_{\lambda^8} \\ & + (\lambda^4+\lambda^5)b_4b_5\delta_{\lambda^9} +\lambda b_{5}^2\delta_{\lambda^{10}}.
\end{align*} 
It follows that 
$$\nu*t\nu \ge 0 \iff \left\{\begin{array}{cc}
     \lambda^2 b_2^2-(\lambda+\lambda^3)b_1\ge 0  & (\lambda+\lambda^4)b_1b_4-(\lambda^2+\lambda^3)b_2\ge 0 \\
  \lambda^4 b_4^2-(\lambda^3+\lambda^5)b_5 \ge 0   & (\lambda^2+\lambda^5)b_2b_5-(\lambda^3+\lambda^4)b_4
\end{array}\right.$$
Now, using $\lambda^2 b_2^2-(\lambda+\lambda^3)b_1 \le \lambda^2 (b_2^2-2b_1) $ and $\lambda^4 b_4^2-(\lambda^3+\lambda^5)b_5 \le \lambda^4( b_4^2-2b_5)$, it follows that $b_2^2-2b_1\ge 0 $ and $b_4^2-2b_5\ge 0$. \ In this case,  $$\mu \ge 0 \iff b_1b_4-b_2 \ge 0 \; \mbox{ and } b_2b_5-b_4\ge 0  , $$
and as above, 
$$ 
  \nu*t\nu \ge 0 \iff    \frac{\lambda}{1+\lambda^2}\ge max(\frac{b_1}{b_2^2},\frac{b_5}{b_4^2})
 $$
Taking $b_1=b_5=1$ , $b_2=2$ and $  b_4=3$, we obtain 
$$
\mu= \delta_{\lambda^2}+ 4\delta_{\lambda^3} +2\delta_{\lambda^4} +2\delta_{\lambda^5}+15\delta_{\lambda^6}  -2\delta_{\lambda^7}+ 7\delta_{\lambda^8}+ 6\delta_{\lambda^9} +\delta_{\lambda^{10}},
$$
is a charge for every $\lambda$ and $$
\nu*t\nu \ge 0 \iff \frac{\lambda}{1+\lambda^2} \ge \frac{1}{4} \iff \lambda \in (1, 2+\sqrt{3}].
$$
\end{ex}

\bigskip
{\bf Acknowledgments}. \ The first named author was partially supported by NSF grant DMS-2247167. \\ The last named author was partially supported by the Arab Fund Foundation Fellowship Program.~
The Distinguished Scholar Award - File  1026. \ He also acknowledges the mathematics department of the University of Iowa for its kind hospitality during the preparation of this paper.

%
%



\begin{thebibliography}{100}
	\bibitem{al}  A. Aluthge, On $p$-polynomial operators for ${0<p<1}$, {\it Integral Equations Operator Theory} 13(1990), 307--315.
	\bibitem{brz} R. Ben Taher, M. Rachidi and H. Zerouali,  On the Aluthge transform of weighted shifts with moments of Fibonacci type. Application to subnormality, {\it Integral Equations Operator Theory} 82(2015), 27--299.
    \bibitem{curt2} C. Benhida, R.E. Curto and G.R. Exner, Moment infinite divisibility of weighted shifts: sequence conditions, Complex Anal. Oper. Theory 16(2022), art. 5, 1--23. 
    \bibitem{Con} J.B. Conway, {\it The Theory of Subnormal Operators}, Mathematical Surveys and Monographs, vol. 36, American Mathematical Society, Providence (1991).
    \bibitem{curt1}   R.E. Curto and G.R. Exner, Berger measure for some transformations of subnormal weighted shifts, {\it Integral Equations Operator Theory} 84(2016), 49--450.
    \bibitem{CF} R.E. Curto and L.A. Fialkow, Recursively generated weighted shifts and the subnormal completion problem, {\it Integral Equations Operator Theory} 17(1993), 202--246. 
	\bibitem{Curto2019} R.E. Curto, J. Kim and J. Yoon, The Aluthge transform of unilateral weighted shifts and the Square Root Problem for finitely atomic measures, {\it Math. Nachr.} 292(2019), 2352--2368.
    \bibitem{EEEEE} H. El Azhar, A. Hanine, K. Idrissi, and E. H. Zerouali, Square root problem and subnormal Aluthge transforms, {\it Ann. Funct. Anal.} 14, 8 (2023).
	\bibitem{exn} G.R. Exner, Aluthge transforms and $n-$contractivity of weighted shifts, {\it J. Operator Theory} 61(2009), 419--438.
    \bibitem{SH} S.H. Lee, W.Y. Lee, and J. Yoon
	Subnormality of Aluthge transforms of weighted shifts, {\it Integral Equations Operator Theory} 72(2012), 241--251.
    \bibitem{SH1} S.H. Lee and J. Yoon, {\it The square root problem and Aluthge transform of weighted shifts}, {\it Math. Nachr.} 290(2017), 2925--2933.
    \bibitem{Ritt} J.F. Ritt, Algebraic combinations of exponentials, {\it Trans. Amer. Math. Soc.} 31(1929), 654--679.
\bibitem{Wid}  D.V. Widder, {\it The Laplace Transform}, Princeton University Press, 1941.
\end{thebibliography}
\end{document}